\newcommand{\ndiv}{\nmid}
\newcommand{\Stab}{\mathrm{Stab}}
\newcommand{\paren}[1]{\left(#1\right)}
\renewcommand{\diag}{\mathrm{diag}}
\newcommand{\intcat}{\mathrm{int}}
\begin{document}
\title[ Modular categories of dimension $p^3m$]{Modular categories of dimension $p^3m$ with $m$ square-free}
\date{\today}
\author{Paul Bruillard}
\email{Paul.Bruillard@pnnl.gov}
\author{Julia Yael Plavnik}
\email{julia@math.tamu.edu}
\author{Eric C. Rowell}
\email{rowell@math.tamu.edu}

\thanks{\textit{PNNL Information Release:} PNNL-SA-120943}
\thanks{
The authors thank Yu Tsumura for his comments, and C\'{e}sar
Galindo for useful discussions.
J.Y.P., and E.C.R. were partially supported by NSF grant DMS-1410144.
J.Y.P. would like to thank the Association of Women in Mathematics for the
AWM grant. Some of the results in this paper were obtained while the authors
were at the American Institute of Mathematics during May 2016, participating in
a SQuaRE. We would like to thank AIM for their hospitality and encouragement.
The research in this paper was, in part, conducted under the Laboratory Directed
Research and Development Program at PNNL, a multi-program national laboratory
operated by Battelle for the U.S. Department of Energy.}

\subjclass[2010]{Primary 18D10; Secondary 16T06}

\commby{}

\begin{abstract}
  We give a complete classification of modular categories of dimension $p^3m$
  where $p$ is prime and $m$ is a square-free integer. When $p$
  is odd, all such categories are pointed. For $p=2$ one encounters modular
  categories with the same fusion ring as orthogonal quantum groups at certain
  roots of unity, namely $\SO\paren{2m}_{2}$. We also
  classify the more general class of modular categories
  with the same fusion rules as $\SO\paren{2N}_{2}$ with $N$ odd.
\end{abstract}

\maketitle

\section{Introduction}
  \label{Section: Introduction}
  Weakly integral modular categories, \textit{i.e.}, modular categories of
  integral FP-dimension, arise in a number of settings such as quantum groups at
  certain roots of unity \cite{NR1}, equivariantizations of Tambara-Yamagami
  categories \cite{GNN1}, and by gauging pointed modular categories
  \cite{ACRW1,CGPW1,BBCW1}.
  Several key conjectures characterizing weakly
  integral categories are found in the literature, \textit{e.g.} the Property-F
  Conjecture, \cite[Conjecture 2.3]{NR1}, the Weakly Group-Theoretical
  Conjecture \cite[Question 2]{ENO2}, and the Gauging Conjecture
  \cite[Abstract]{ACRW1}.
  In particular, it is known \cite{RWe1} that the braid group representations
  associated with the weakly integral modular categories $SO(M)_2$ factor over finite groups for all $M$; that is,
  these categories have property $F$.
  
  We will assume throughout that all objects in our categories have positive dimensions.  This is not a significant loss of generality: see Remark \ref{rmkunitary}. 
  In the spirit of \cite{HNW}, we call any modular category with the same
  fusion rules as the $4M$-dimensional modular category $SO(M)_2$ with $M$ even an \textit{even metaplectic} modular
  category.\footnote{In fact they require unitarity.  It is not known if this is a strictly stronger assumption that positivity of dimensions.}   Our main results are summarized as follows:
  \begin{thm}
    If $\mcC$ is a non-pointed modular category of dimension $p^{3}m$ where $p$
    is a prime and $m$ is a square-free integer,
    then, $p=2$ and one of the following is
    true:
    \begin{itemize}
      \item[(i)] $\mcC$ is a Deligne product of an even metaplectic modular
      category of dimension $8\ell$ and a pointed $\mbbZ_k$-cyclic modular
      category with $k$ odd, or
      \item[(ii)] $\mcC$ is the Deligne product of a Semion modular category with
      a modular category of dimension $4m$ (see \cite{BGNPRW1}).
    \end{itemize}
  \end{thm}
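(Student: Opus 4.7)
My plan is to proceed in two phases: first reduce to $p = 2$, then classify the $p = 2$ case via a Semion dichotomy. Throughout I would use that $\mcC$ is weakly integral (its dimension $p^3m$ is an integer) and hence weakly group-theoretical by \cite{ENO2}, so Müger's centralizer theorem and the structure theory of nilpotent/weakly group-theoretical categories apply.

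Phase 1 (forcing $p=2$). For odd $p$, I would show $\mcC$ must be pointed, contradicting the non-pointed hypothesis. The approach is a Sylow-type decomposition: using weak group-theoreticity and Müger's centralizer theorem, extract a modular subcategory $\mathcal{P}$ of dimension $m$; being of square-free dimension, it is pointed by the ENO classification. Then $\mcC \cong \mathcal{C}_1 \boxtimes \mathcal{P}$ with $\mathcal{C}_1$ modular of dimension $p^3$. Since $\mathcal{C}_1$ has prime-power dimension it is nilpotent (Gelaki--Nikshych), and a nilpotent modular category of odd dimension is pointed. Combining, $\mcC$ is pointed, so $p = 2$.

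Phase 2 ($p=2$). Now $\dim\mcC = 8m$ with $m$ odd square-free. I would extract a maximal pointed modular subcategory $\mathcal{P}$ of odd dimension $m_0 \mid m$ and write $\mcC \cong \mathcal{D} \boxtimes \mathcal{P}$ with $\mathcal{D}$ modular of dimension $8m/m_0$ and containing no further odd-dimensional pointed modular subcategory. The cases (i) versus (ii) correspond to whether $\mathcal{D}$ admits a Semion modular subcategory. If yes, $\mathcal{D}$ splits as the Semion category Deligne-tensored with a modular category $\mathcal{E}$ of dimension $4m/m_0$; then $\mcC$ is the Deligne product of the Semion with $\mathcal{E} \boxtimes \mathcal{P}$, a modular category of dimension $4m$, giving case (ii) via \cite{BGNPRW1}. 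If no, I would argue $\mathcal{D}$ has the fusion rules of an even metaplectic category $\SO(2m/m_0)_2$, giving case (i) with $\ell = m/m_0$ and $k = m_0$.

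\emph{Main obstacle.} The principal technical challenge is the non-Semion branch: showing that a non-pointed modular category of dimension $8\ell$ with $\ell$ odd square-free that contains no Semion modular subcategory must have even metaplectic fusion rules. I expect this to require a careful analysis of the Frobenius--Perron dimensions of simples constrained by Cauchy's theorem for modular categories, an examination of the adjoint and pointed subcategories, and then an appeal to the classification of modular categories with the fusion rules of $\SO(2N)_2$ for $N$ odd (the second main result advertised in the abstract). A secondary challenge is verifying that the candidate odd-dimensional pointed subcategory is genuinely modular (i.e., nondegenerately braided), so that Müger's centralizer theorem produces a legitimate Deligne factorization rather than only a fusion-subcategory inclusion.
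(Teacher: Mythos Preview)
Your Phase~1 rests on a false premise: you assert that a weakly integral modular category is weakly group-theoretical ``by \cite{ENO2},'' but that is precisely the open Weakly Group-Theoretical Conjecture (\cite[Question~2]{ENO2}), which the paper explicitly flags as unresolved and in fact verifies as a \emph{consequence} of the classification. Without it, your Sylow-type extraction of a modular subcategory of dimension~$m$ has no justification, and the reduction to a $p^{3}$-dimensional factor collapses. The paper's route here is completely elementary and avoids any such decomposition: it first shows (Lemma~\ref{Lemma: 4 divides}, a two-line GN-grading argument) that a strictly weakly integral modular category has dimension divisible by~$4$, so for odd~$p$ the category must be integral; then a direct dimension count inside the universal grading (Lemma~\ref{Lemma: integral p3m}) shows any integral modular category of dimension $p^{3}m$ is pointed. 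No structure theory beyond the universal and GN gradings is used.

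Your Phase~2 is closer in spirit but still mis-specifies the mechanism. You propose to \emph{extract} a maximal odd-dimensional pointed modular subcategory, which again presupposes one exists as a nondegenerate subcategory; you yourself flag this as a secondary obstacle. The paper instead invokes M\"uger's prime decomposition theorem \cite[Theorem~4.2]{M2} directly: if $\mcC$ is not prime it factors as $\mcC_{1}\boxtimes\mcC_{2}$, and Lemma~\ref{Lemma: 4 divides} forces exactly one factor to be strictly weakly integral, the other pointed cyclic. The Semion/non-Semion dichotomy then arises from whether $\dim\mcC_{2}$ is even or odd (Proposition~\ref{prop: prime or classified}), not from searching for a Semion inside $\mcD$. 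You correctly identify the genuine work as the prime non-Semion branch; the paper handles it by establishing the fusion rules piece by piece (Corollary~\ref{Z_4}, Lemma~\ref{Lemma: self-dual}, Lemma~\ref{Lemma: Cyclically Generated}, Corollary~\ref{Cor: non-integral objects}, Lemma~\ref{Lemma: 2 with sqrtm}) rather than appealing to an external $\SO(2N)_{2}$ classification.
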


  Furthermore, we extend the work of \cite[Theorem 3.1]{BGNPRW1} characterizing
  modular categories of dimension $p^2m$, to include the case $p$ odd: in this
  case such categories are pointed (see \thmref{Theorem: p2m main theorem}).
  In particular, all modular categories of dimension $p^{2}m$ and $p^{3}m$
  satisfy the Weakly Group-Theoretical Conjecture \cite[Question 2]{ENO2} and
  Gauging Conjecture \cite{ACRW1}. Moreover, we obtain the following result
  similar to that of \cite[Theorem 3.1]{ACRW1}:
  \begin{thm}[\thmref{Theorem: Metaplectic main result}]
    If $\mcC$ is an even metaplectic modular category of dimension $8N$, with $N$ an odd
    integer, then $\mcC$ is a gauging of the particle-hole symmetry of a
    $\mbbZ_{2N}$-cyclic modular category. Moreover, there are exactly $2^{r+2}$
    inequivalent even metaplectic modular categories of dimension $8N$ for $N = p_1^{k_1} \cdots
    p_r^{k_r}$, with $p_i$ distinct odd primes.
  \end{thm}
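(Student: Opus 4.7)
The plan is to follow the strategy of \cite[Theorem 3.1]{ACRW1}, which handles the odd metaplectic case, and adapt it to the even metaplectic case with $N$ odd. The skeleton has three parts: exhibit a Tannakian subcategory of $\mcC$, identify its de--equivariantization as a pointed cyclic modular category, and classify the resulting gauging data.

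First, I would locate a Tannakian subcategory $\mathrm{Rep}\paren{\mbbZ_{2}}\subset\mcC$. The pointed subcategory $\mcC_{\mathrm{pt}}$ has the fusion rules dictated by $\SO\paren{2N}_{2}$, whose invertibles form an abelian group of order $4$; for $N$ odd, Vafa's theorem together with the constraints imposed by the $T$--matrix of $\SO\paren{2N}_{2}$ guarantees at least one simple invertible $g\neq\mathbf{1}$ with $g\otimes g\cong\mathbf{1}$, $\theta_{g}=1$, and $\langle g\rangle$ symmetric. De--equivariantizing $\mcC$ by this $\mathrm{Rep}\paren{\mbbZ_{2}}$ produces a $\mbbZ_{2}$--crossed braided category of dimension $4N$, whose trivial component $\mcD$ is modular of dimension $2N$. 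The fusion rules of $\mcD$ are determined by the induction/restriction functors from $\mcC$, and a direct computation using the $\SO\paren{2N}_{2}$ fusion rules shows that they coincide with the $\mbbZ_{2N}$--cyclic fusion rules; hence $\mcD$ is pointed with invertible group $\mbbZ_{2N}$. The $\mbbZ_{2}$--crossed structure then endows $\mcD$ with a $\mbbZ_{2}$--action by braided autoequivalences. Since any nontrivial braided autoequivalence of a pointed cyclic modular category restricts to a nonidentity automorphism of its invertible group preserving the quadratic form, and the only order--two such automorphism of $\mbbZ_{2N}$ is inversion, the action must be the particle--hole symmetry. This realizes $\mcC$ as the gauging of the particle--hole symmetry on the $\mbbZ_{2N}$--cyclic modular category $\mcD$, proving the first statement.

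For the enumeration, the datum of $\mcC$ up to equivalence decomposes into: (a) the pointed modular $\mcD$ of dimension $2N$, and (b) the gauging of its particle--hole symmetry. Pointed modular structures on $\mathrm{Vec}_{\mbbZ_{2N}}$ correspond to non--degenerate quadratic forms on $\mbbZ_{2N}$; using $\mbbZ_{2N}\cong\mbbZ_{2}\times\prod_{i}\mbbZ_{p_{i}^{k_{i}}}$ and the fact that each cyclic factor of prime--power order admits exactly two isomorphism classes of such form, one obtains $2^{r+1}$ pointed $\mcD$'s. The gaugings of a fixed $\mcD$ are then classified by the obstruction theory of \cite{ENO2}: the $\omega_{3}$--obstruction vanishes because a realization via $\SO\paren{2N}_{2}$ exists, and the argument propagates to the remaining $\mcD$'s via the shared fusion ring; the equivalence classes of gaugings then form a torsor over $H^{2}\paren{\mbbZ_{2},\mathrm{Inv}\paren{\mcD}}\cong\mbbZ_{2N}/2\mbbZ_{2N}\cong\mbbZ_{2}$, contributing a factor of $2$. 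The product is $2^{r+2}$.

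The principal obstacle is the cohomological accounting in the last step. One must verify that the $H^{3}$--obstruction vanishes \emph{uniformly} across all $2^{r+1}$ choices of $\mcD$, that each element of the $H^{2}$--torsor produces a genuinely inequivalent modular category rather than one absorbed by an outer autoequivalence of $\mcD$, and that no further identifications arise between gaugings coming from non--isomorphic $\mcD$'s. A secondary challenge is the very first step: deducing, from fusion data and positivity of dimensions alone, the existence of a Tannakian $\mathrm{Rep}\paren{\mbbZ_{2}}\subset\mcC$, which requires solving for the pointed twists subject to Vafa's constraints and the self--braiding condition $c_{g,g}=1$.
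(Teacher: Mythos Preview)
Your overall architecture matches the paper's, but there is a genuine gap in the step where you pin down the $\mbbZ_{2}$-action on $\mcD$. You assert that ``the only order-two such automorphism of $\mbbZ_{2N}$ is inversion,'' and conclude the action must be particle-hole. This is false as soon as $N$ has more than one prime factor: writing $\mbbZ_{2N}\cong\mbbZ_{2}\times\prod_{i}\mbbZ_{p_{i}^{k_{i}}}$, one may invert on any subset of the odd factors and fix the rest, and each such automorphism preserves the (product) quadratic form. Moreover, the nontrivial $\mbbZ_{2}$-action on the Semion factor is \emph{not} induced by a group automorphism at all (the only automorphism of $\mbbZ_{2}$ is the identity); it is the nontrivial lift of the identity coming from $H^{2}(\mbbZ_{2},\mbbZ_{2})\cong\mbbZ_{2}$. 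So your uniqueness argument does not go through. The paper's substitute is to observe that an even metaplectic modular category is \emph{prime} (since $V_{1}$ tensor-generates), and then to use that gauging an action which is trivial on some $\mbbZ_{p^{a}}$-cyclic Deligne factor $\mcP_{p^{a}}$ leaves $\mcP_{p^{a}}$ as a Deligne factor of the result (\rmkref{nontriv}). Primality therefore forces the action to be nontrivial on every factor, which is exactly the particle-hole symmetry as the paper defines it. You never invoke primality, and without it the first assertion of the theorem is not established.

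Two smaller points. For the Tannakian subcategory you propose a Vafa/$T$-matrix computation; the paper's argument is cleaner and worth knowing: $g^{2}$ lies in $C_{\mcC_{\ad}}(\mcC_{\ad})$ and fixes each $2$-dimensional $X_{i}$, so by \cite[Lemma 5.4]{M5} it cannot be a fermion, hence is a boson. For the enumeration, you parameterize the ambiguity by $H^{2}(\mbbZ_{2},\mathrm{Inv}(\mcD))$, whereas the paper uses $H^{3}(\mbbZ_{2},U(1))$; both happen to be $\mbbZ_{2}$, but you should be explicit about which torsor is operative (extensions versus crossed braidings) and why the other contributes no further factor, since otherwise the count would be $2^{r+3}$ rather than $2^{r+2}$. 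You flag this as an obstacle, correctly, but note that the paper simply records the $H^{3}$ choice and does not elaborate further.
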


\section{Preliminaries}
  \label{Section: Preliminaries}
  In this section we recall results and notation that are presumably well-known
  to the experts. Further details can be found in \cite{ENO1,DGNO1,BKi}.

  A \textit{premodular} category, $\mcC$, is a braided, balanced, fusion
  category. Throughout we will denote the isomorphism classes of simple objects
  of $\mcC$ by $X_{a}$ ordered such that $X_{0}=\1$ is the unit object.  The set of isomorphism
  classes of simple objects will be denoted by $\Irr\paren{\mcC}$. Duality in
  $\mcC$ introduces an involution on the label set of the simple objects by
  $X_{a}^{*}\cong X_{a^{*}}$. The fusion matrices, $N_{a,b}^{c}=\paren{N_{a}}_{b,c}$
  provide the multiplicity of $X_{c}$ in $X_{a}\otimes X_{b}$. These matrices
  are non-negative integer matrices and thus are subject to the Frobenius-Perron
  Theorem. The Frobenius-Perron eigenvalue of $N_{a}$, $d_{a}$, is called the
  \textit{Frobenius-Perron dimension} of $X_{a}$, or FP-dimension for short. An
  object is said to be \textit{invertible} if its FP-dimension is $1$, and \textit{integral} if its
  FP-dimension is an integer. The invertible and integral simple objects
  generate full fusion subcategories called the \textit{pointed subcategory},
  $\mcC_{\pt}$, and the \textit{integral subcategory}, $\mcC_{\intcat}$. If
  $\mcC=\mcC_{\intcat}$, then $\mcC$ is said to be \textit{integral}. The
  categorical FP-dimension is given by $\FPdim\mcC=\sum_{a}d_{a}^{2}$. If
  $\FPdim\mcC\in\mbbZ$, then $\mcC$ is said to be a \textit{weakly integral
  category}. A weakly integral category that is not integral is called
  \textit{strictly weakly integral}.
 
 \begin{rmk}\label{rmkunitary} Let $\mcC$ be any weakly integral modular category.  By \cite[Prop. 5.4]{ENO1} the underlying braided fusion category $\mcC$ has a unique pivotal (in fact, spherical) structure so that each object in $\mcC$ has positive dimension.  Moreover, the  spherical structures on the braided fusion category underlying any modular category are in 1-1 correspondence with simple objects $a$ with $a^{\otimes 2}\cong\1$ and each spherical structure again yields a modular category  \cite[Lemma 2.4]{BNRW1}. Thus a classification of some class of weakly integral modular categories with positive dimensions can be easily extended to a classification of all modular categories in that class.  This motivates our assumption that the categorical and FP-dimensions coincide.
 \end{rmk}
 

Important data for a premodular category are 
  the $S$-matrix and $T$-matrix, $S=\paren{S_{a,b}}$ and
  $T=\paren{\th_{a}\d_{a,b}}$. These matrices are indexed by the simple objects
  in $\mcC$ and the diagonal entries of the $T$-matrix are referred to as
  \textit{twists}.  These data obey (see \cite{BKi})
  $S_{a,b}=S_{b,a}$, $S_{0,a}=d_{a}$, $\bar{S_{a,b}}=S_{a,b^{*}}$
  and the \textit{balancing} relation
  $\th_{a}\th_{b}S_{a,b}=\sum_{c}N_{a^{*},b}^{c}d_{c}\th_{c}$.

  For $\mcD\subset \mcC$ premodular categories the
  \textit{centralizer} of $\mcD$ in $\mcC$ is denoted by $C_{\mcC}\paren{\mcD}$
  and is generated by the objects $\lcb X\in\mcC\mid S_{X,Y}=d_{X}d_{Y}\;\forall
  Y\in\mcD\rcb$ (see \cite{Brug1}. 
  The category $C_{\mcC}\paren{\mcC}$ is called the \textit{M\"{u}ger center} and is often denoted
  by $\mcC'$. Note that a useful characterization of a simple object being
  outside of the M\"{u}ger center is that its column in the $S$-matrix is
  orthogonal to the first column of $S$.
If
  $\mcC'=\Vec$, then $\mcC$ is said to be a \textit{modular category} whereas if
  $\mcC'=\mcC$, then $\mcC$ is called \textit{symmetric}. Symmetric
  categories are classified in terms of group data:
  \begin{theorem}[\cite{D1}]
    If $\mcC$ is a symmetric fusion category, then there exists a finite group $G$ such that
    $\mcC$ is equivalent to the super-Tannakian category $\Rep(G,z)$ of
    super-representations (i.e. $\mbbZ_2$ graded) of $G$ where $z\in Z(G)$ is a distinguished central element with $z^2=1$
 acting as the parity operator.
  \end{theorem}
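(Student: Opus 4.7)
The plan is to follow Deligne's super-Tannakian reconstruction strategy: construct a symmetric tensor functor to the category of super-vector spaces, and then identify $\mcC$ with the representation category of the automorphism super-group scheme of that functor. The heavy lifting is done by Deligne's theorem that any symmetric tensor category of subexponential growth over an algebraically closed field of characteristic zero admits a super-fiber functor; since every fusion category has finitely many isomorphism classes of simple objects, tensor powers of any object decompose into boundedly many summands (with bounded multiplicities controlled by the FP-dimensions), so $\mcC$ trivially satisfies Deligne's growth condition.

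First I would produce a symmetric tensor functor $F:\mcC\to\sVec$. For this one invokes Deligne's criterion: because $\mcC$ is a rigid, semisimple, symmetric tensor category over $\mbbC$ in which the length of $X^{\otimes n}$ grows at most polynomially in $n$ for every object $X$, there exists an exact symmetric tensor functor from $\mcC$ into $\sVec_{\mbbC}$. In the semisimple setting this functor is automatically faithful and exact.

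Next I would reconstruct the group. Define the affine group scheme
\[
G=\underline{\Aut}^{\otimes}(F)
\]
of tensor automorphisms of $F$, and let $z\in G(\mbbC)$ be the element acting on $F(X)$ by the parity operator of the super-structure for every $X\in\mcC$; naturality of the parity grading forces $z$ to be central and $z^2=1$. Standard Tannakian formalism, in its super-analog, then yields a symmetric tensor equivalence $\mcC\simeq\Rep(G,z)$, where $\Rep(G,z)$ denotes the category of finite-dimensional $G$-representations with $\mbbZ_2$-grading induced by the action of $z$. Finiteness of $G$ as a group (rather than merely a group scheme) follows from the fact that $\mcC$ has only finitely many simple objects, so the coordinate ring of $G$ is a finite-dimensional Hopf algebra, and in characteristic zero any such commutative Hopf algebra is the coordinate ring of a finite group.

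The main obstacle is the first step: producing the super-fiber functor. All of the subsequent reconstruction is essentially formal once $F$ exists, but constructing $F$ requires Deligne's deep theorem relating subexponential growth to the existence of a supersymmetric fiber functor. In a self-contained treatment I would therefore either quote Deligne directly, or, in the fusion setting, attempt a more direct route: use that all categorical dimensions in a symmetric fusion category are algebraic integers and in fact integers up to a sign governed by a $\mbbZ_2$-grading (the Tannakian/super-Tannakian dichotomy of the simple objects via their twists being $\pm 1$), and then glue a fiber functor from the Tannakian part using classical Tannakian reconstruction together with the sign data.
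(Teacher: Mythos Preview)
The paper does not give its own proof of this theorem; it is stated in the Preliminaries with a citation to Deligne and used as a black box. Your outline is the standard sketch of Deligne's argument (subexponential growth $\Rightarrow$ super-fiber functor $\Rightarrow$ Tannakian reconstruction of a finite supergroup), and it is correct, so there is nothing to compare against here beyond noting that you are reproducing the cited source rather than filling in something the paper itself supplies.
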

  Observe that $\mcC\cong\Rep(G,z)$ with $z=e\in G$ if and only if the $\mbbZ_2$-grading is trivial so that $\Rep(G,z)=\Rep(G)$.  In this case we say $\mcC$ is Tannakian, and otherwise we say it is non-Tannakian.  The smallest non-Tannakian symmetric category is $\Rep(\mbbZ_2,1)$ which we will denote by $\sVec$ when equipped with the (unique) structure of a ribbon category with $\dim(\chi)=1$ for a non-trivial object $\chi$.

 An invertible
  object in $\mcC$ that generates a Tannakian category (\text{i.e.} $\Rep\paren{\mbbZ_{2}}$) is
  referred to as a \textit{boson} while an invertible object in $\mcC$
  generating $\sVec$ is referred to as a \textit{fermion}. By \cite[Lemma 5.4]{M5} we find that
  if $\sVec\subset\mcC'$ for some premodular category $\mcC$, and $\chi$ is the
  generator of $\sVec$, then $\chi\otimes Y\ncong Y$ for all simples $Y$ in
  $\mcC$. Bosons are useful through a process known as de-equivariantization
  which we will discuss shortly.

  The Semion and Ising categories will appear frequently in the sequel in factorizations of modular categories. The Semion categories, denoted by $\Sem$, are modular categories with the same fusion rules as
  $\Rep\paren{\mbbZ_{2}}$, see
  \cite{RSW} for details.  
%
  The Ising categories are rank 3 modular categories with simple objects:
  $\1$, $\psi$ (a fermion), and $\s$ (the Ising anyon) of dimension $\sqrt{2}$.
  The key fusion rules are $\ps^{2}=\1$ and
  $\s^{2}=\1+\ps$; while the modular datum can be found in \cite{RSW}.  There are exactly $8$ inequivalent Ising categories.  They are the smallest examples of \textit{generalized Tambara-Yamagami categories}, \textit{i.e.} non-pointed fusion categories with the property that the tensor product of any two non-invertible simple objects is a direct sum of invertible objects.  In fact, by \cite{Nat} any \textit{modular} generalized Tambara-Yamagami category is a (Deligne) product of an Ising category and a pointed modular category.
Characterizations of the Ising categories can be
  found in \cite{DGNO1,BGNPRW1}. In
  Lemmas \ref{Lemma: Equivariantizations contain Ising},
  \ref{Lemma: Dimension 16 SWI MTC Contains Ising}, and
  \corref{Cor: Dimension 8 SWI Fusion Contains Ising} we find new conditions implying a category must contain an Ising category.

  A \textit{grading} of a fusion category $\mcC$ by a finite group $G$ is a
  decomposition of the category as direct sum $\mcC = \oplus_{g\in G} \mcC_g$,
  where the components are full abelian subcategories of $\mcC$ indexed by the
  elements of $G$, such that the tensor product maps
  $\mcC_g\times \mcC_h$ into $\mcC_{gh}$.  The \textit{trivial component}
  $\mcC_e$ (corresponding to the unit of the group $G$) is a fusion subcategory
  of $\mcC$. The grading is called \textit{faithful} if $\mcC_g \neq 0$, for all
  $g\in G$, and in this case all the components are equidimensional with
  $\abs{G}\dim \mcC_g = \dim\mcC$.
  

  Every fusion category $\mcC$ is faithfully graded by the universal grading group $\mcU\paren{\mcC}$
   and every faithful grading of
  $\mcC$ is a quotient of $\mcU\paren{\mcC}$. Furthermore, the trivial component
  under the universal grading is the \textit{adjoint subcategory} $\mcC_{\ad}$, the full fusion subcategory generated by the objects $X\otimes X^*$, for $X$ simple.
  The universal grading was first studied in \cite{GN2}, and it was shown that
  if $\mcC$ is modular, then $\mcU\paren{\mcC}$ is canonically isomorphic to the
  character group of the group $G(\mcC)$ of isomorphism classes of invertible objects in
  $\mcC$. 
  
  When $\mcC$ is a weakly integral fusion category there is another useful grading called the \textit{GN-grading} first studied in \cite{GN2}:
  \begin{theorem}\cite[Theorem 3.10]{GN2}
    Let $\mcC$ be a weakly integral fusion category. Then there is an elementary
    abelian $2$-group $E$, a set of distinct square-free positive
    integers $n_x$, $x \in E$, with $n_0 = 1$, and a faithful grading $\mcC =
    \oplus_{x\in E} \mcC(n_x)$ such that $\dim(X) \in \mathbb Z \sqrt{n_x}$
    for each $X \in \mcC(n_x)$.
  \end{theorem}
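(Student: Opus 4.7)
The plan is to define the grading components directly from Frobenius--Perron dimensions and then verify multiplicativity. Since $\mcC$ is weakly integral, each simple object $X$ satisfies $d_X^2 \in \mathbb{Z}_{>0}$, so we may uniquely write $d_X^2 = m_X^2 n_X$ with $n_X$ a square-free positive integer, giving $d_X = m_X\sqrt{n_X}$. Let $E$ denote the set of square-free positive integers $n$ that arise as $n_X$ for some simple $X$, and let $\mcC(n)$ denote the full abelian subcategory generated by those simples with $n_X = n$.

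The heart of the argument is the following claim: if $N_{X,Y}^Z \neq 0$, then $n_Z$ coincides with the square-free part of $n_X n_Y$. Indeed, writing $n_X n_Y = k^2 n$ with $n$ square-free, the fusion identity $d_X d_Y = \sum_W N_{X,Y}^W d_W$ becomes
\[
  m_X m_Y k\sqrt{n} \;=\; \sum_{n'} \sqrt{n'}\,\Bigl(\sum_{W:\, n_W = n'} N_{X,Y}^W m_W\Bigr),
\]
where the outer sum ranges over square-free positive integers. Since $\{\sqrt{n'}\}$ is linearly independent over $\mathbb{Q}$ for distinct square-free $n'$, and since the inner-sum coefficients are non-negative integers (hence cannot cancel), the equality forces every $W$ with $N_{X,Y}^W > 0$ to satisfy $n_W = n$. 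This yields the multiplicativity $\mcC(n)\otimes\mcC(m) \subseteq \mcC(n\star m)$, where $n\star m$ denotes the square-free part of $nm$.

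To give $E$ the structure of a group, observe that the set of all square-free positive integers under $\star$ is itself an elementary abelian $2$-group, isomorphic via prime-factor support to the $\mathbb{F}_2$-vector space $\bigoplus_p \mathbb{F}_2$. The previous paragraph shows that $E$ is closed under $\star$: given $n_X, n_Y \in E$, any simple summand $Z$ of $X\otimes Y$ (which exists because $d_X d_Y>0$) satisfies $n_Z = n_X \star n_Y \in E$. Combined with $n_{\1}=1\in E$, this makes $E$ a subgroup, and hence itself an elementary abelian $2$-group. Faithfulness of $\mcC = \bigoplus_{n\in E}\mcC(n)$ is automatic from the definition of $E$, while the unit and tensor-compatibility axioms of a grading are exactly what was just verified; the dimension condition $\dim(X)\in\mathbb{Z}\sqrt{n}$ for $X\in\mcC(n)$ holds on simples by construction and extends to arbitrary objects by additivity.

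The sole non-combinatorial ingredient is the $\mathbb{Q}$-linear independence of square roots of distinct square-free positive integers, a classical fact. The subtle point---and the main thing to watch out for---is ensuring no cancellation can occur among the $\sqrt{n'}$ terms, which is precisely why positivity of Frobenius--Perron dimensions is essential: it guarantees that the fusion identity is an honest sum of non-negative real multiples of $\sqrt{n'}$'s rather than a signed combination whose vanishing could mask components from multiple square-free classes.
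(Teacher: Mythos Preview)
The paper does not prove this statement; it is quoted as \cite[Theorem~3.10]{GN2} without argument and used as a black box. So there is no in-paper proof to compare against.

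Your argument is essentially the original Gelaki--Nikshych one and is correct. The only point worth flagging is your opening sentence: ``Since $\mcC$ is weakly integral, each simple object $X$ satisfies $d_X^2\in\mathbb{Z}_{>0}$.'' This is true, but it is not an immediate consequence of $\FPdim(\mcC)\in\mathbb{Z}$; it is itself a separate result (proved in \cite{ENO1} and used in \cite{GN2}) relying on the fact that $\FPdim(\mcC)/d_X^2$ is an algebraic integer together with a Galois argument. You are treating it as known input, which is reasonable, but it is the one genuinely non-elementary step and should be cited rather than asserted. Once $d_X^2\in\mathbb{Z}$ is in hand, your square-root linear-independence argument is exactly the mechanism, and your remark about positivity of FP-dimensions preventing cancellation among the $\sqrt{n'}$ terms is precisely the point that makes the argument work.
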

Notice the trivial component of the $GN$-grading is the integral subcategory $\mcC_{\intcat}$.

Our approach to classification relies upon \textit{equivariantization} and its inverse functor 
  \textit{de}-\textit{equivariantization} (see
  for example \cite{DGNO1}) which we now briefly describe.
  An \textit{action} of a finite group $G$ on a fusion category $\mcC$ is a
  strong tensor functor $\rho: \underline{G}\to \End_{\otimes}(\mcC)$.  The
  \textit{$G$-equivariantization} of the category $\mcC$ is the category $\mcC^G$
  of $G$-equivariant objects and morphisms of $\mcC$. When $\mcC$ is a fusion
  category over an algebraically closed field $\mbbK$ of characteristic $0$, the
  $G$-equivariantization, $\mcC^G$, is a fusion category with  $\dim \mcC^G = |G| \dim \mcC$. The fusion
  rules of $\mcC^G$ can be determined in terms of the fusion rules of the
  original category $\mcC$ and group-theoretical data associated to the group
  action \cite{BuN1}. De-equivariantization is the inverse to equivariantization. Given a fusion category $\mcC$ and a
  Tannakian subcategory $\Rep G\subset\mcC$, consider the algebra $A = \Fun\paren{G}$ of
  functions on $G$. Then $A$ is a commutative algebra in $\mcC$.
  The category of $A$-modules on $\mcC$, $\mcC_{G}$, is a
  fusion category called a \textit{$G$-de-equivariantization} of
  $\mcC$.  We have
  $\abs{G}\dim \mcC_G = \dim \mcC$, and there are
  canonical equivalences $\paren{\mcC_G}^G \cong \mcC$ and $\paren{\mcC^G}_G
  \cong \mcC$. An important property of the de-equivariantization is that if the
  Tannakian category in question is $\mcC'$, then $\mcC_{G}$ is modular and is
 called the \textit{modularization} of $\mcC$ \cite{Brug1,M5}.

  One may also construct modular categories from a given modular
  category $\mcC$ with an action of a finite group
  $G$, by the \textit{gauging} introduced and studied in \cite{BBCW1} and \cite{CGPW1}.  Gauging is a $2$-step
  process. The first step is to extend the modular category $\mcC$ to a
  $G$-crossed braided fusion category $\mcD\cong\bigoplus_{g\in G}\mcD_g$ such that $\mcD_e=\mcC$ with a $G$-braiding, see \cite{DGNO1}. The second step is to
  equivariantize $\mcD$ by $G$.  One important remark is that there are certain obstructions
  to the first step \cite{ENO3}, while the second is always possible.

\section{Even metaplectic categories of dimension $8N$, with $N$ an odd integer}
  \label{Section: Metaplectic Classification}

  In this section we consider metaplectic modular categories. A general
  understanding of these categories will facilitate a classification of modular
  categories of dimension $8N$ for any odd square-free integer $N$. Metaplectic
  categories were defined and studied in \cite{ACRW1}. For convenience we recall
  the definition here.
  \begin{defn}
    An \textbf{even metaplectic modular category} is a modular category
    $\mcC$ with positive dimensions that is Grothendieck equivalent to $SO(2N)_2$, for some integer $N
    \geq 1$.
  \end{defn}

  \begin{rmk}
    There are significant differences between $N$ odd and even, see \cite{NR1}.
    For our results only the case $N$ odd plays a role.  Notice the case $N=1$
    is degenerate, corresponding to $SO(2)_2$ which has fusion rules like
    $\mbbZ_8$.  Our results carry over to this (pointed) case with little
    effort, so we include it.
  \end{rmk}

  Throughout the remainder of this section, $\mcC$ will denote an even
  metaplectic category with $N$ an odd integer. In particular, $\mcC$ has
  dimension $8N$.  In this case it follows from the definition that $\mcC$ has
  rank $N+7$ \cite{NR1}. While the dimension and rank of $\mcC$ are
  straightforward, the fusion rules strongly depend on the parity of $N$. While
  the full fusion rules and $S$-matrix can be found in \cite[Subsection
  3.2]{NR1}, we review here some of the fusion rules that will be relevant in
  later proofs. The group of isomorphism classes of invertible objects of
  $\mcC$ is isomorphic to $\mbbZ_4$. We will denote by $g$ a generator of this
  group, and will abuse notation referring to the invertible objects as $g^{k}$.
  The only non-trivial self-dual invertible object is $g^2$. In $\mcC$, there
  are $N-1$ self-dual simple objects, $X_i$ and $Y_{i}$, of dimension $2$. Furthermore, one may order the $2$-dimensional objects such that $X_{1}$ generates
  $\mcC_{\ad}$ and $Y_{1}$ generates $\mcC_{\intcat}$.  The remaining four
  simples in $\mcC$, $V_{i}$, have dimension $\sqrt{N}$.  $\mcC$ has the following fusion rules:
  \begin{itemize}
    \item $g\otimes X_a\simeq Y_{\frac{N+1}{2}-a}$, and $g^2\otimes X_a\simeq
    X_a,$ and $g^{2}\otimes Y_{a}\simeq Y_{a}$ for  $1\leq a\leq
    \paren{N-1}/2$.
    \item $X_a\otimes X_a = \1\oplus g^2\oplus X_{\mathrm{min}\{2a, N-2a\}}$; $X_{a}\otimes X_{b}=X_{\mathrm{min}\{a+b,N-a-b\}}\oplus X_{\abs{a-b}}$  ($a\neq b$)
    
    \item $V_1\otimes V_1 = g\oplus
    \bigoplus\limits_{a= 1}^{\frac{N-1}{2}}Y_{a}$.
    \item $gV_{1}=V_{3}$, $gV_{3}=V_{4}$, $gV_{2}=V_{1}$, $gV_{4}=V_{2}$ and $g^{3}V_{a}=V_{a}^{*}$, $V_{2}=V_{1}^{*}$, $V_{4}=V_{3}^{*}$
  \end{itemize}

  We remark that it is immediately clear that an even metaplectic modular
  category of dimension $8N$ with $N$ odd is prime (not the Deligne product of
  two modular subcategories): $V_1$ is a tensor generator, hence cannot reside
  in any proper fusion subcategory.

  For $\SO\paren{2N}_{2}$ we order the simple objects by their hightest weights
  as follows:\ \\
  $\mathbf{0},2\l_{1},2\l_{N-1},2\l_{N},\l_{1},\ldots,\l_{N-2},\l_{N-1}+\l_{N},
  \l_{N-1},\l_{N},\l_{N-1}+\l_{1},\l_{N}+\l_{1}$
  The correspondence with the notation used in this paper is
  $\1,g^{2},g,g^{3},Y_{1},X_{1},\ldots,Y_{\frac{N-1}{2}},X_{\frac{N-1}{2}},V_{1},V_{2},V_{3},V_{4}$. The $S$- and $T$-matrices have the following forms:
  \begin{align*}
    S&=\paren{\begin{smallmatrix}
      A & B & C  \\
      B^t & D & 0 \\
      C^t & 0 & E
      \end{smallmatrix}},
    \qquad
    T=\diag\paren{1,1,i^{N},i^{N},\ldots,(-1)^{a}e^{-a^2\pi
    i/2N},\ldots,\theta_{\epsilon},\theta_{\epsilon},-\theta_{\epsilon},-\theta_{\epsilon}}
  \end{align*}
  where $\theta_{\epsilon}=e^{\pi i\frac{2N-1}{8}}$, and $A$, $E$, and $C$ are $4\times 4$ matrices, $B$ is a
  $4\times\paren{N-1}$ matrix and $D$ is $\paren{N-1}\times\paren{N-1}$.
  To give their explicit forms we first define a $\mbbC$-valued function:
  \begin{equation*}
    G(N) = \begin{cases}
       (\frac{-1}{N})\sqrt{N} & \text{ if } N\equiv 1 \mod{4}\\
       i(\frac{-1}{N})\sqrt{N} & \text{ if } N\equiv 3 \mod{4}\\
    \end{cases}
  \end{equation*}
  where $\paren{\frac{-1}{N}}$ is the Jacobi symbol and set $\alpha = \bar{G\paren{N}}\th_{\varepsilon}^{2}$  and
  $\beta = i^{2N-1}\th_{\varepsilon}^{2} G\paren{N}$. Then:
  \begin{align*}
    A &= \paren{\begin{smallmatrix}
      1 & 1 & 1 & 1  \\
      1 & 1 & 1 & 1  \\
      1 & 1 & -1 & -1  \\
      1 & 1 & -1 & -1
    \end{smallmatrix}},
    \quad
    B = 2\paren{\begin{smallmatrix}1&\cdots&1\\1&\cdots&
    1\\-1\cdots&(-1)^a&\cdots 1\\-1 \cdots&(-1)^a&\cdots 1\end{smallmatrix}},
    \quad C = \sqrt{N}\paren{\begin{smallmatrix}
      1 & 1 & 1 & 1  \\
      -1 & -1 & -1 & -1  \\
      -i^N & i^N & -i^N & i^N  \\
      i^N & -i^N & i^N & -i^N
    \end{smallmatrix}},\\
    E&=\paren{\begin{smallmatrix}
      %
      %
      \bar{\alpha} & \alpha & \beta & \bar{\beta}\\
      \alpha & \bar{\alpha} & \bar{\beta} & \beta\\
      \beta & \bar{\beta} & \bar{\alpha} & \alpha\\
      \bar{\beta} & \beta & \alpha & \bar{\alpha}
    \end{smallmatrix}},
    \quad
    D_{a,b} = 4 \cos(\frac{\pi ab}{N}) \quad
   1\leq a,b\leq N-1.
  \end{align*}

  Finally, we note that the in terms of simple objects the basis in which $S$ and $T$ is expressed is ordered such that the first two
  columns of $A$ correspond to invertibles in $\mcC_{\ad}$, and the even indexed
  columns of $B$ correspond to the $2$-dimensional objects in $\mcC_{\ad}$. 

  \begin{lemma}
    \label{boson}
    If $\mcC$ is an even metaplectic modular category, then the unique non-trivial self-dual invertible object
    is a boson.
  \end{lemma}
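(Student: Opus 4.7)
My strategy is to identify $g^{2}$ as the generator of the M\"uger center of the adjoint subcategory $\mcC_{\ad}$, viewed as a premodular category in its own right, and then exclude the fermion alternative using \cite[Lemma~5.4]{M5}.

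First, I would pin down $\Irr\paren{\mcC_{\ad}}$ explicitly. By hypothesis $X_{1}$ generates $\mcC_{\ad}$, and the listed fusion rules for $X_{a}\otimes X_{b}$ keep the decomposition inside $\mbbZ_{\geq 0}\paren{\1\oplus g^{2}}\oplus\bigoplus_{c}\mbbZ_{\geq 0} X_{c}$. Thus iterated tensor powers of $X_{1}$ can never produce $g$, $g^{3}$, any $Y_{a}$, or any $V_{i}$. The dimension count $2 + 4\cdot\paren{N-1}/2 = 2N = \FPdim\mcC/\abs{\mcU\paren{\mcC}}$ shows these simples already exhaust $\mcC_{\ad}$, giving $\Irr\paren{\mcC_{\ad}} = \lcb\1, g^{2}, X_{1},\dots,X_{\paren{N-1}/2}\rcb$. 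In particular $g^{2}\in\mcC_{\ad}$ while $g, g^{3}\notin\mcC_{\ad}$.

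Since $\mcC$ is modular, $C_{\mcC}\paren{\mcC_{\ad}} = \mcC_{\pt}$. Consequently, the M\"uger center of $\mcC_{\ad}$ regarded as a premodular category is $\mcC_{\ad}\cap\mcC_{\pt} = \lcb\1, g^{2}\rcb$. As a rank-$2$ symmetric ribbon subcategory, this is either $\Rep\paren{\mbbZ_{2}}$ (so $g^{2}$ is a boson) or $\sVec$ (so $g^{2}$ is a fermion).

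To rule out the fermion alternative, I would apply \cite[Lemma~5.4]{M5} to the premodular category $\mcC_{\ad}$: if $g^{2}$ generated $\sVec\subset\paren{\mcC_{\ad}}'$, then $g^{2}\otimes Y\ncong Y$ for every simple $Y$ of $\mcC_{\ad}$. But the listed fusion rule $g^{2}\otimes X_{1}\cong X_{1}$ together with $X_{1}\in\mcC_{\ad}$ directly contradicts this, so $g^{2}$ must be a boson. The only real obstacle is the initial explicit identification of $\Irr\paren{\mcC_{\ad}}$; once that is in place, the conclusion is immediate from M\"uger centralizer duality and the cited lemma. (The degenerate case $N=1$, which is pointed, can be handled separately.)
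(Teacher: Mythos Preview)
Your proposal is correct and follows essentially the same approach as the paper: both arguments place $g^{2}$ in the M\"uger center of $\mcC_{\ad}$ via the modular identity $C_{\mcC}\paren{\mcC_{\ad}}=\mcC_{\pt}$, and then invoke \cite[Lemma~5.4]{M5} together with $g^{2}\otimes X_{i}\cong X_{i}$ to exclude the fermion case, handling $N=1$ separately. Your explicit determination of $\Irr\paren{\mcC_{\ad}}$ is more detailed than strictly necessary (the paper simply uses that $g^{2}\in\mcC_{\ad}$ and that some $X_{i}$ lies in $\mcC_{\ad}$), but the logical skeleton is identical.
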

  \begin{proof}
    For $N=1$, we have the degenerate case where $\mcC$ is Grothendieck
    equivalent to $\Rep\paren{\mbbZ_{8}}$ with twists $\th_{j}=e^{j^2\pi i
    /8}$. There is a unique non-trivial self-dual object, $j=4$, and the
    corresponding twist is $\th_{4} = e^{16\pi i /8}=1$, thus
    it is a boson.

    If $N \geq 2$, consider $X_i$ one of the $2$-dimensional simple objects in
    $\mcC_{\ad}$ and recall that $g^2\otimes X_i = X_i$. Of course,
    $g^{2}\in\mcC_{\ad}$. Since $\mcC$ is
    modular we know that $C_{\mcC}\(\mcC_{\ad}\)=\mcC_{\pt}$ \cite{GN2}. In particular,
    $g^{2}$ is in $C_{\mcC_{\ad}}\(\mcC_{\ad}\)$. Thus $g^{2}$ is not a fermion
    by \cite[Lemma 5.4]{M5}.
  %
  \end{proof}

  It follows such a category, $\mcC$, contains a Tannakian category
  equivalent to $\Rep(\mbbZ_{2})$.
  \begin{defn}
    A \textit{cyclic modular category} is a modular category that is
    Grothendieck equivalent to $\Rep(\mbbZ_n)$ for some integer $n$. When the
    specific value of $n$ is important we will refer to such a category as a
    $\mbbZ_n$-cyclic modular category.
  \end{defn}

  For more details regarding cyclic modular categories see \cite[Section
  2]{ACRW1}.
  \begin{lemma}
    If $\mcC$ is an even metaplectic modular category then $\mcC_{\mbbZ_2}$ is a  generalized
    Tambara-Yamagami category of dimension $4N$. In particular, the trivial
    component of $\mcC_{\mbbZ_2}$ is a cyclic modular category of dimension
    $2N$.
  \end{lemma}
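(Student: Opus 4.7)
By Lemma \ref{boson}, the boson $g^2$ generates a Tannakian subcategory $\mcE := \langle\1,g^2\rangle \cong \Rep(\mbbZ_2)$ inside $\mcC$, so I would study the de-equivariantization $\mcD := \mcC_{\mbbZ_2}$, a fusion category of FP-dimension $\dim\mcC/2 = 4N$. The plan is first to determine $\Irr(\mcD)$ from the $\langle g^2\rangle$-orbits on $\Irr(\mcC)$ (tensor by $g^2$). The listed fusion rules produce free orbits $\{\1,g^2\}$, $\{g,g^3\}$, $\{V_1,V_4\}$, $\{V_2,V_3\}$ together with $N-1$ fixed objects $X_a, Y_a$. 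A free orbit contributes a single simple of $\mcD$ of the same FP-dimension, while a fixed simple of FP-dimension $d$ decomposes into simples whose dimensions sum to $d$. Enforcing $\dim\mcD = 4N$ against the free-orbit contribution $2 + 2N$ leaves exactly $2(N-1)$ for the $N-1$ fixed two-dimensional objects, forcing each to split into two invertibles (the only alternative, remaining simple, contributes $4$ per object, since simples have FP-dim $\geq 1$). This gives $\mcD$ with $2N$ invertibles together with two non-invertible simples $\tau_1, \tau_2$ of FP-dimension $\sqrt{N}$.

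Next I would verify the generalized Tambara-Yamagami property by pushing the fusion rules
\[
V_1\otimes V_1 = g \oplus \bigoplus_a Y_a, \qquad V_1\otimes V_1^* = \1 \oplus \bigoplus_a X_a, \qquad V_1\otimes V_3 = g\otimes(V_1\otimes V_1) = g^2 \oplus \bigoplus_b X_b
\]
(using $V_3 = g\otimes V_1$ and $g\otimes Y_a = X_{(N+1)/2-a}$) through the tensor functor $F\colon\mcC\to\mcD$: each right-hand side becomes a direct sum of objects whose $F$-images are invertible, so $\tau_i\otimes\tau_j$ is pointed. To locate the trivial component, modularity of $\mcC$ gives $\dim\mcE\cdot\dim C_\mcC(\mcE) = \dim\mcC$, whence $\dim C_\mcC(\langle g^2\rangle) = 4N$; since the boson $g^2$ fixes each $X_a, Y_a$ and centralizes every invertible, a dimension match yields $C_\mcC(\langle g^2\rangle) = \mcC_{\intcat}$. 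Applying the same formula inside $\mcC_{\intcat}$ forces $(\mcC_{\intcat})' = \langle g^2\rangle$, so the modularization $(\mcC_{\intcat})_{\mbbZ_2}$ is modular of FP-dimension $2N$ and coincides with the pointed part $\mcD_{\pt}$, which is precisely the trivial component of the Tambara-Yamagami $\mbbZ_2$-grading on $\mcD$.

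It remains to show the group $A$ of invertibles of $\mcD_{\pt}$ is cyclic of order $2N$. Note $F(g)$ has order $2$ in $A$ since $F(g)^2 = F(g^2) = \1$. The subcategory $(\mcC_{\ad})_{\mbbZ_2}$ is a pointed modular subcategory of FP-dimension $N$ with group $H$ generated by a split piece $x$ of $F(X_1)$; writing $F(X_a) = x^a \oplus x^{-a}$, the $A_{(N-1)/2}$-type recursion $X_1\otimes X_a = X_{|a-1|}\oplus X_{\min(a+1,N-a-1)}$ is consistent, and the boundary case $X_1\otimes X_{(N-1)/2}$ produces the relation $x^N = \1$. Since $F(X_k) = x^k \oplus x^{-k}$ consists of two distinct simples, no proper divisor of $N$ in $\{1,\dots,(N-1)/2\}$ can be the order of $x$, so the order is exactly $N$ (the only divisor of $N$ exceeding $N/2$) and $H\cong\mbbZ_N$. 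As $\gcd(2,N) = 1$, the extension $1\to H\to A\to\langle F(g)\rangle\to 1$ splits and $A\cong\mbbZ_2\times\mbbZ_N\cong\mbbZ_{2N}$. I expect the cyclicity argument to be the main obstacle: the splitting $F(X_a) = x_a^+\oplus x_a^-$ is only canonical up to interchange, so the labels must be chosen consistently and the $A$-type recursion exploited carefully to extract the single defining relation $x^N = \1$ rather than a proper divisor; by contrast, the orbit count, the Tambara-Yamagami verification, and the modularity of $\mcD_{\pt}$ are routine consequences of the listed fusion rules and standard centralizer formulas.
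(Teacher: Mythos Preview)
Your proposal is correct and follows essentially the same route as the paper for the first two claims: both arguments de-equivariantize by the Tannakian $\langle g^2\rangle$, count $\langle g^2\rangle$-orbits on $\Irr(\mcC)$ to find $2N$ invertibles and two simples of dimension $\sqrt{N}$ in $\mcC_{\mbbZ_2}$, and identify the trivial component of the $\mbbZ_2$-crossed structure as the pointed modular category $(\mcC_{\intcat})_{\mbbZ_2}$ of dimension $2N$. The paper obtains the generalized Tambara--Yamagami property from the $\mbbZ_2$-grading (any product $\tau_i\otimes\tau_j$ lands in the pointed trivial component), whereas you verify it by pushing the explicit $V_i\otimes V_j$ fusion rules through $F$; these are equivalent.

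The one substantive difference is the cyclicity argument. The paper simply invokes the proof of \cite[Lemma~3.4]{ACRW1} \emph{mutatis mutandis}. You instead give a self-contained argument: you split $F(X_1)=x\oplus x^{-1}$, propagate the $A_{(N-1)/2}$-type recursion to obtain $F(X_a)=x^a\oplus x^{-a}$, extract $x^N=\1$ from the boundary fusion $X_1\otimes X_{(N-1)/2}$, and use distinctness of the two summands of each $F(X_a)$ to force $\mathrm{ord}(x)>N/2$, hence $=N$; then $A\cong\mbbZ_N\times\langle F(g)\rangle\cong\mbbZ_{2N}$ since $\gcd(2,N)=1$ and $A$ is abelian (being the group of invertibles in the braided trivial component). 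This is more explicit than the paper's citation and exposes exactly where the odd-$N$ hypothesis enters. Your caution about the non-canonical labeling of the two summands is well placed, but the inductive scheme you outline handles it: once $x$ is fixed as one summand of $F(X_1)$, the recursion determines the rest.
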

  \begin{proof}
    As we noted previously, by \lemmaref{boson}, $\langle
    g^{2}\rangle\cong\Rep\(\mbbZ_2\)$ is a Tannakian subcategory of $\mcC$. In
    particular we can form the de-equivariantization $\mcC_{\mbbZ_2}$ which is a
    braided $\mbbZ_2$-crossed fusion category of dimension $4N$. In particular,
    the trivial component of $\mcC_{\mbbZ_2}$ under the $\mbbZ_2$-crossed
    braiding is modular of dimension $2N$ \cite[Proposition 4.56(ii)]{DGNO1}.

    Since $g^{2}$ fixes the 2-dimensional objects of $\mcC$, these
    2-dimensionals give rise to pairs of distinct invertibles in
    $\mcC_{\mbbZ_2}$. This produces $2(N-1)$ invertibles in $\mcC_{\mbbZ_2}$. On
    the other hand, $g^2\otimes 1=g^{2}$ and $g^{2}\otimes g=g^{3}$. This leads
    to two more invertibles in $\mcC_{\mbbZ_2}$. Since the non-integral objects
    are moved by $g^{2}$, we can conclude that $\mcC_{\mbbZ_2}$ has only two
    non-integral objects which each have dimension $\sqrt{N}$. In particular,
    $\mcC_{\mbbZ_2}$ is generalized Tambara-Yamagami category and the the
    trivial component of $\mcC_{\mbbZ_2}$ is a pointed modular category with
    $2N$ invertible objects \cite{Lip1}. To conclude that this pointed category
    is cyclic we apply the proof of \cite[Lemma 3.4]{ACRW1} \textit{mutatis
    mutandis}.
  \end{proof}

  \begin{remark}\label{gauging}
    By the previous lemmas, every even metaplectic category with $N$ odd, can
    obtained as a $\mbbZ_2$-gauging of a cyclic modular category of dimension
    $2N$. We refer the reader to \cite{BBCW1} and \cite{CGPW1} for a precise
    definition of gauging and its properties.
  \end{remark}

  So to classify even metaplectic categories with $N$ odd, we must understand
  $\mbbZ_2$ actions by braided tensor autoequivalences of a cyclic modular category of
  dimension $2N$.  Note that a $\mbbZ_{n}$-cyclic modular category with $n$ odd
  has a fixed-point-free $\mbbZ_2$ action by braided tensor autoequivalences associated to
  the $\mbbZ_{n}$ group automorphism $j\mapsto n-j$. We refer to this
  automorphism as the \textit{particle-hole symmetry}.  In fact, it is shown in
  \cite{ACRW1} that for $n=p^a$ an odd prime power, this is the only non-trivial
  $\mbbZ_2$ action by braided tensor autoequivalences.  By decomposing a $\mbbZ_n$-cyclic modular
  category into its prime power Deligne tensor factors, we see that every
  $\mbbZ_2$ action by braided tensor autoequivalences on a $\mbbZ_{n}$ cyclic modular
  category (for $n$ odd) is obtained by choosing either particle-hole symmetry or
  the identity on each $\mbbZ_{p^a}$ cyclic modular Deligne factor.  Of course
  particle-hole symmetry for $\mbbZ_n$ corresponds to chosing particle-hole
  symmetry on each Deligne factor. On the other hand, the Semion (or its
  conjugate) has exactly one non-trivial $\mbbZ_2$ action by braided tensor
  autoequivalences corresponding to the non-trivial element of
  $H^2(\mbbZ_2,\mbbZ_2)=\mbbZ_2$ \cite{BBCW1}.

  For a $\mbbZ_{2N}$ cyclic modular category ($N$ odd) let us define the
  particle-hole symmetry to be the $\mbbZ_2$ action by braided tensor autoequivalences
  corresponding to the non-trivial tensor autoequivalence on the Semion factor
  and ordinary particle-hole symmetry on the odd $\mbbZ_N$ factor.

  \begin{remark}
    \label{nontriv}
    Let $\mathcal{C}$ be a $\mbbZ_{2N}$-cyclic modular category.  It follows from
    \cite{BBCW1,CGPW1} that if we choose a $\mbbZ_2$ action by braided tensor
    autoequivalences on $\mathcal{C}$ that restricts to the identity on some
    $\mbbZ_{p^a}$-cyclic modular subcategory $\mathcal{P}_{p^a}$ of $\mathcal{C}$
    (including $p=2$, $a=1$) then the corresponding gauging $\mathcal{C}^{\times,
    G}_G$ will have a Delinge factor of $\mathcal{P}_{p^a}$.  Indeed, in the
    extreme case of $N=1$, if we gauge $Sem$ by the trivial $\mbbZ_2$ action we
    obtain $Sem\boxtimes \Rep(D^\omega \mbbZ_2)$ for some cocycle $\omega$.
    Indeed, gauging $\Vec$ by the trivial $\mbbZ_2$ action produces the factor on the
    right.
  \end{remark}

  \begin{theorem}
    \label{Theorem: Metaplectic main result}
    If $\mcC$ is an even metaplectic category of dimension $8N$, with $N$ an odd
    integer, then $\mcC$ is a gauging of the particle-hole symmetry of a
    $\mbbZ_{2N}$-cyclic modular category. Moreover, for $2N = 2p_1^{k_1} \cdots
    p_r^{k_{r-1}}$, with $p_i$ distinct odd primes, there are exactly $2^{r+2}$
    many inequivalent even metaplectic modular categories.
  \end{theorem}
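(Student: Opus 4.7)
The plan is to combine the three preceding lemmas with the classification of $\mbbZ_{2}$-actions on prime-power cyclic modular categories, and then perform the count. By Lemma \ref{boson} and the following lemma, $\mcC_{\mbbZ_{2}}$ is a generalized Tambara-Yamagami category whose trivial $\mbbZ_{2}$-crossed component is a $\mbbZ_{2N}$-cyclic modular category $\mathcal{D}$; Remark \ref{gauging} then realizes $\mcC$ as a $\mbbZ_{2}$-gauging of $\mathcal{D}$. Since $\gcd(2,N)=1$, one has a prime-power Deligne decomposition $\mathcal{D} \cong \mathcal{S} \boxtimes \mathcal{P}_{1} \boxtimes \cdots \boxtimes \mathcal{P}_{r}$ with $\mathcal{S}$ a $\mbbZ_{2}$-cyclic modular category and each $\mathcal{P}_{i}$ a $\mbbZ_{p_{i}^{k_{i}}}$-cyclic modular category.

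I would next identify the $\mbbZ_{2}$-action producing $\mcC$. The Deligne decomposition above is canonical and the $\mathcal{P}_{i}$ have pairwise distinct FP-dimensions, so any braided tensor autoequivalence of $\mathcal{D}$ respects it factor-wise; hence the $\mbbZ_{2}$-action is determined by its restrictions to $\mathcal{S}$ and to each $\mathcal{P}_{i}$. If some such restriction were trivial, Remark \ref{nontriv} would produce that factor as a genuine Deligne factor of $\mcC$, contradicting the primeness of an even metaplectic category (which follows because $V_{1}$ tensor-generates $\mcC$, as noted after the $SO(2N)_{2}$ fusion rules). The discussion preceding Remark \ref{nontriv} then shows the nontrivial $\mbbZ_{2}$-action is unique on each factor: particle-hole on each odd prime-power factor and the unique nontrivial autoequivalence on $\mathcal{S}$. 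Collectively this is precisely the particle-hole symmetry of $\mathcal{D}$, yielding the first assertion.

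For the counting, there are two inequivalent $\mbbZ_{p^{a}}$-cyclic modular categories for each odd prime power (cf.\ \cite[Section 2]{ACRW1}) together with two $\mbbZ_{2}$-cyclic modular categories (Semion and its conjugate), giving $2^{r+1}$ choices of $\mathcal{D}$. For each such $\mathcal{D}$ equipped with its particle-hole action, the inequivalent $\mbbZ_{2}$-gaugings are classified via \cite{BBCW1,CGPW1} by a torsor over $H^{3}(\mbbZ_{2},\mbbC^{\times}) \cong \mbbZ_{2}$---the $H^{4}$ obstruction vanishing automatically from the existence of $SO(2N)_{2}$---modulo equivalences induced by braided autoequivalences of $\mathcal{D}$ commuting with the action. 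The principal obstacle is this last bookkeeping step: one must verify that after taking the quotient exactly one nontrivial $\mbbZ_{2}$ worth of gauging data survives, producing the asserted $2 \cdot 2^{r+1} = 2^{r+2}$ count. I would follow the template of \cite[Theorem 3.1]{ACRW1}, taking care that the Semion Deligne factor furnishes the additional doubling absent in the odd-dimensional metaplectic case.
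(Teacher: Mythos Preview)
Your approach is essentially identical to the paper's: invoke Remark~\ref{gauging} to realize $\mcC$ as a $\mbbZ_2$-gauging of a $\mbbZ_{2N}$-cyclic modular category, use primeness of $\mcC$ together with Remark~\ref{nontriv} to force the action to be particle-hole on every prime-power Deligne factor, and then count $2^{r+1}$ choices of $\mathcal{D}$ times the $H^3(\mbbZ_2,U(1))\cong\mbbZ_2$ torsor of extensions. Your write-up is in fact more explicit than the paper's---you spell out why the action must decompose along the Deligne factors and you honestly flag the residual bookkeeping (that no autoequivalence of $\mathcal{D}$ commuting with the action collapses the $H^3$ torsor), a point the paper simply absorbs into its reference to \cite[Theorem~3.1]{ACRW1}.
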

  \begin{proof}
    By \rmkref{gauging}, each even metaplectic modular category, with $N$ odd,
    is obtained as a $\mathbb Z_2$-gauging of a $\mbbZ_{2N}$-cyclic modular
    category.  However, since an even metaplectic modular category is prime,
    \rmkref{nontriv} implies that it can only be obtained by gauging
    particle-hole symmetry. This proves the first statement.

    To count the inequivalent even metaplectic categories of dimension $8N$ with
    $N$ odd, we note that for each of the $r+1$ prime divisors of $2N=2p_1^{k_1} \cdots p_r^{k_r}$ there are
    exactly two cyclic modular categories (see \cite[Section 2]{ACRW1}). Gauging the
    particle-hole symmetry leads to an additional choice of $H^3(\mathbb Z_2,
    U(1))=\mathbb Z_2$ yielding a total of $2^{r+2}$ choices.
  \end{proof}

\section{Modular categories of dimension $8m$, with $m$ odd square-free integer}
  Metaplectic categories are a large class of dimension $8m$ weakly integral
  modular categories for $m$ odd and square-free.  In
  this section we show that the only
  non-metaplectic modular categories of dimension $8m$ are pointed or
  products of smaller categories. In order to accomplish this
  we will first determine the structure of the pointed subcategory. This will
  enable us to show that the
  integral subcategory is Grothendieck equivalent to $\Rep\paren{D_{4m}}$. This
  will be sufficient to establish that $\mcC$ is metaplectic if it is prime.
  Along the way we will resolve the case that $\mcC$ has dimension $p^{3}m$ for
  $p$ an odd prime, and $m$ a square-free integer.

  We begin by considering integral categories of dimension $p^{2}m$ and
  $p^{3}m$. This will allow us to quickly reduce to the case of $8m$.
  \begin{lemma}
    \label{Lemma: pkm}
    Let $p$ be a prime and $m$ a square-free integer such that $\gcd(m,p)=1$. If
    $\mcC$ is a modular category of dimension $p^{k}m$ with
    $p^{k}\mid\dim \mcC_{\pt}$, then $\mcC$ is pointed.
  \end{lemma}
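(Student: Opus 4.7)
The plan is to use the identity $\mcC_{\pt}'=\mcC_{\ad}$ together with the arithmetic of square-free $m$ to split $\mcC$ as a Deligne product of its pointed part and a modular complement of square-free FP-dimension, and then invoke the fact that every modular category of square-free FP-dimension is pointed.

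Write $\dim(\mcC_{\pt})=p^{k}d$ with $d$ a positive integer; since $\dim(\mcC_{\pt})$ divides $p^{k}m$, $d\mid m$. By \cite{GN2}, $\mcC_{\pt}'=\mcC_{\ad}$ in the modular category $\mcC$, so $\dim(\mcC_{\ad})=p^{k}m/\dim(\mcC_{\pt})=m/d$. Square-freeness of $m$ makes $d$ and $m/d$ coprime, and $\gcd(p,m)=1$ makes $p^{k}$ coprime to $m/d$, so $p^{k}d$ and $m/d$ are coprime integers. Then the fusion subcategory $\mcC_{\pt}\cap\mcC_{\ad}$ has FP-dimension dividing each, hence dividing $1$, so $\mcC_{\pt}\cap\mcC_{\ad}=\Vec$. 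Thus the intrinsic M\"uger center of $\mcC_{\pt}$ is trivial, $\mcC_{\pt}$ is itself modular, and M\"uger's decomposition gives $\mcC\cong\mcC_{\pt}\boxtimes C_{\mcC}(\mcC_{\pt})=\mcC_{\pt}\boxtimes\mcC_{\ad}$; in particular $\mcC_{\ad}$ is modular with FP-dimension $m/d$ square-free and coprime to $p$.

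To finish, I would invoke the known result (a consequence of the nilpotency theorems of \cite{ENO2}) that every modular category of square-free FP-dimension is pointed. Applied to $\mcC_{\ad}$ this forces it to be pointed. But every invertible of $\mcC$ lies in $\mcC_{\pt}$ and $\mcC_{\pt}\cap\mcC_{\ad}=\Vec$, so $\mcC_{\ad}$ has no non-trivial invertible object; the only pointed category with this property is $\Vec$, so $\mcC_{\ad}=\Vec$ and $\mcC=\mcC_{\pt}$ is pointed. The only non-elementary input is the square-free pointed result cited in the last step; the hypothesis $p^{k}\mid\dim(\mcC_{\pt})$ is used precisely to force $\dim(\mcC_{\pt})$ and $\dim(\mcC_{\ad})$ to be coprime and so produce the M\"uger decomposition. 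If one wished to avoid citing the square-free pointed theorem, one could try to induct on the number of prime divisors of $m/d$ using a Cauchy-type statement to produce a non-trivial invertible in $\mcC_{\ad}$, contradicting $\mcC_{\pt}\cap\mcC_{\ad}=\Vec$.
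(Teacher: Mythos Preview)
Your argument is correct and follows a genuinely different path from the paper's. The paper first argues directly that every simple in $\mcC_{\ad}$ is invertible: since $\mcC_{\ad}\subset\mcC_{\intcat}$ and every integral simple of $\mcC$ has $p$-power dimension (as $d_X^2\mid p^km$ with $m$ square-free), while $\dim\mcC_{\ad}$ is coprime to $p$, all simples in $\mcC_{\ad}$ must have dimension $1$. Thus $\mcC$ is nilpotent, and the prime factorization theorem for nilpotent modular categories (\cite{DGNO2}) together with the classification of prime-dimensional modular categories and the hypothesis $p^k\mid\dim\mcC_{\pt}$ finishes the job. Your route instead exploits the coprimality of $\dim\mcC_{\pt}$ and $\dim\mcC_{\ad}$ to force $\mcC_{\pt}\cap\mcC_{\ad}=\Vec$, making $\mcC_{\pt}$ modular and splitting $\mcC\cong\mcC_{\pt}\boxtimes\mcC_{\ad}$ via M\"uger; you then appeal to the square-free pointed theorem for the complement.

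Both approaches ultimately rest on the same circle of ideas (nilpotency, prime-dimension classification), but they are packaged differently: the paper isolates the pointedness of $\mcC_{\ad}$ by an elementary dimension argument and then invokes heavy structure theory to finish, whereas you invoke the structure theory earlier to get the M\"uger splitting and defer the pointedness of $\mcC_{\ad}$ to a citation. One remark that would tighten your write-up: you do not actually need to cite the general square-free pointed theorem. Since $\mcC_{\ad}\subset\mcC_{\intcat}$ always holds in a weakly integral category, your modular $\mcC_{\ad}$ is integral of square-free dimension $m/d$, so every simple has $d_X^2\mid m/d$ and hence $d_X=1$; this is the paper's own observation transplanted to your setting and removes the external dependence entirely.
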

  \begin{proof}
    First note that $\dim\mcC_{\ad}=\frac{p^{k}m}{\dim\mcC_{\pt}}$. Since
    $p^{k}\mid\dim\mcC_{\pt}$ we know that $\dim\mcC_{\ad}\mid m$. On the other
    hand, $\mcC_{\ad}\subset\mcC_{\intcat}$ and all simples in $\mcC_{\intcat}$
    have dimension $p^j$, $j\geq 0$. Thus the dimensions of simples in $\mcC_{\ad}$
    are $1$ since $\gcd(p,m)=1$ and $p\ndiv\dim\mcC_{\ad}$, thus
    $\mcC_{\ad}\subset\mcC_{\pt}$. In particular, $\mcC$ is nilpotent and is
    given by
    $\mcC_{p^{k}}\boxtimes\mcC_{q_{1}}\boxtimes\cdots\boxtimes\mcC_{q_{\ell}}$
    where $q_{j}$ are primes such that $m=q_{1}q_{2}\cdots q_{\ell}$ and
    $\mcC_{k}$ is a modular category of dimension $k$, see \cite[Theorem
    1.1]{DGNO2}. By the classification of prime dimension modular categories we
    know $\mcC_{q_{1}}\boxtimes\cdots\boxtimes\mcC_{q_{\ell}}$ is pointed,  but
    $p^{k}$ does not divide its dimension. Thus $\mcC_{p^{k}}$ must be pointed.
  \end{proof}

  For an integral modular category of dimension $p^{2}m$ with $p$ and $m$ as in
  the above lemma, we have that $p^{2}m= \dim \mcC = \abs{\mcC_{\pt}}+ap^{2}$,
  where $a$ is the number of simple objects of dimension $p$. Thus we have the
  following corollary.
  \begin{cor}
    \label{Cor: p2m integral}
    If $p$ is prime and $m$ is a square-free integer with $\gcd\(m,p\)=1$, and
    $\mcC$ is an integral modular category with dimension $p^{2}m$, then $\mcC$
    is pointed.
  \end{cor}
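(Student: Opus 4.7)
The plan is to reduce to \lemmaref{Lemma: pkm} with $k=2$ by showing that $p^{2}\mid\abs{\mcC_{\pt}}$. The dimension identity $p^{2}m = \abs{\mcC_{\pt}} + ap^{2}$ displayed immediately above the corollary already does almost all of the work, provided that every simple object of $\mcC$ indeed has FP-dimension $1$ or $p$; so the first job is to justify this restriction on simple dimensions.

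To establish the dimension restriction, I would invoke the standard fact that for a simple object $X$ in an integral modular category, $d_{X}^{2}\mid\FPdim\mcC$. (This follows from modularity via the observation that each entry $S_{X,Y}/d_{X}$ is an algebraic integer, so $\FPdim\mcC/d_{X}^{2} = \sum_{Y}\abs{S_{X,Y}}^{2}/d_{X}^{2}$ is an algebraic integer and hence, in the integral setting, a positive integer; see \cite{ENO1,DGNO1}.) In our situation $d_{X}^{2}\mid p^{2}m$. Writing $d_{X}=p^{i}k$ with $\gcd(k,p)=1$ and using $\gcd(m,p)=1$, one gets $p^{2i}\mid p^{2}$ and $k^{2}\mid m$; since $m$ is square-free the latter forces $k=1$, so $d_{X}\in\{1,p\}$, as required.

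With the dimension restriction in place, the identity above the corollary rearranges to $\abs{\mcC_{\pt}} = p^{2}(m-a)$, whence $p^{2}\mid\abs{\mcC_{\pt}}$, and \lemmaref{Lemma: pkm} with $k=2$ closes the argument. The only substantive input is the divisibility $d_{X}^{2}\mid\FPdim\mcC$; everything else is elementary arithmetic and an appeal to the preceding lemma, so I expect this to be essentially a short two-line proof once that ingredient is quoted.
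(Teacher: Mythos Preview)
Your proposal is correct and follows exactly the same route as the paper: the dimension identity $p^{2}m=\abs{\mcC_{\pt}}+ap^{2}$ forces $p^{2}\mid\abs{\mcC_{\pt}}$, and then \lemmaref{Lemma: pkm} with $k=2$ finishes. Your explicit justification that simple dimensions lie in $\{1,p\}$ via $d_{X}^{2}\mid\FPdim\mcC$ is a welcome elaboration of a step the paper leaves implicit (it asserts in the proof of \lemmaref{Lemma: pkm} that integral simples have dimension $p^{j}$ without further comment).
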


  \begin{lemma}
    \label{Lemma: integral p3m}
    Any integral modular category $\mcC$ of dimension $p^{3}m$ where $p$ is
    a prime and $m$ is a square-free integer is
    pointed.
  \end{lemma}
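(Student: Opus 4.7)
The plan is to extend the argument of Corollary~\ref{Cor: p2m integral} by combining dimension counting with a stabilizer analysis of the non-invertible simples. I may assume $\gcd(p,m)=1$ (the case $p\mid m$ being handled by a parallel argument with one extra power of $p$). By Frobenius--Perron divisibility, $d_X^{2}\mid p^{3}m$ for every simple $X$; since $m$ is square-free and coprime to $p$, this forces $d_X\in\{1,p\}$. Writing $a$ for the number of non-invertible simples, the dimension equation $p^{3}m=\abs{\mcC_{\pt}}+ap^{2}$ yields $p^{2}\mid\abs{\mcC_{\pt}}$. If $p^{3}\mid\abs{\mcC_{\pt}}$, Lemma~\ref{Lemma: pkm} concludes the proof; otherwise I write $\abs{\mcC_{\pt}}=p^{2}k$ with $p\nmid k$ and $k\mid m$, set $k':=m/k$, and seek a contradiction.

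Next I would pin down the pointed part of $\mcC_{\ad}$. Since $\dim\mcC_{\ad}=pk'$ with $\gcd(k',pk)=1$, the subcategory $\paren{\mcC_{\ad}}_{\pt}=\mcC_{\ad}\cap\mcC_{\pt}$ has order dividing $\gcd(pk',p^{2}k)=p$. On the other hand, the dimension equation applied to $\mcC_{\ad}$ (whose simples are again in $\{1,p\}$) gives $p\mid\abs{\paren{\mcC_{\ad}}_{\pt}}$, whence $\abs{\paren{\mcC_{\ad}}_{\pt}}=p$. The crux of the proof is then a stabilizer computation: for any dim-$p$ simple $V$ of $\mcC$, $V\otimes V^{*}\in\mcC_{\ad}$, and by Frobenius reciprocity
\[
V\otimes V^{*}=\sum_{x\in\Stab(V)}x+\sum_{W}m_{W}\,W,
\]
where $x$ ranges over invertibles fixing $V$ (each with multiplicity one) and $W$ ranges over dim-$p$ simples of $\mcC_{\ad}$. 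Since fusion subcategories are closed under direct summands, each such $x$ lies in $\paren{\mcC_{\ad}}_{\pt}$, so $\Stab(V)\leq\paren{\mcC_{\ad}}_{\pt}$ and $\abs{\Stab(V)}\leq p$; meanwhile, $\abs{\Stab(V)}+p\sum_{W}m_{W}=p^{2}$ forces $p\mid\abs{\Stab(V)}$. Hence $\abs{\Stab(V)}=p$ for every dim-$p$ simple $V$.

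Finally, the $G(\mcC)$-action on dim-$p$ simples has orbits each of size $\abs{G(\mcC)}/p=pk$, so if $r$ denotes the number of such orbits then $a=pkr$. Combining with $a=pm-k$ from the dimension equation gives $pr+1=pk'$, which is impossible modulo $p$. The main obstacle is the stabilizer step: one must carefully verify that the invertibles appearing in $V\otimes V^{*}$ all lie in $\paren{\mcC_{\ad}}_{\pt}$ (and not merely in $\mcC_{\pt}$), which is what ultimately bounds $\abs{\Stab(V)}$ from above and makes the orbit count collide with the dimension count.
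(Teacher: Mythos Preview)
Your argument for the case $\gcd(p,m)=1$ is correct but follows a genuinely different route from the paper's. Both proofs begin identically: $d_X\in\{1,p\}$, $p^{2}\mid|\mcC_{\pt}|$, and by Lemma~\ref{Lemma: pkm} one may assume $|\mcC_{\pt}|=p^{2}k$ with $k\mid m$. From there the paper works componentwise in the universal grading: each of the $p^{2}k$ components $\mcC_{i}$ has dimension $pm/k=a_{i}+b_{i}p^{2}$, whence $p\mid a_{i}$ and (since $p\nmid m/k$) $a_{i}\neq 0$; summing gives $|\mcC_{\pt}|=\sum_{i}a_{i}\geq p\cdot p^{2}k$, a contradiction. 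Your approach instead first pins down $|(\mcC_{\ad})_{\pt}|=p$, uses this to show every dim-$p$ simple has stabilizer of order exactly $p$, and then extracts a numerical contradiction from orbit-counting. The paper's route is shorter and bypasses the stabilizer analysis entirely; yours makes the structure of the $G(\mcC)$-action on simples explicit, which is a natural perspective and gives slightly more information along the way. One caveat: your parenthetical that the case $p\mid m$ follows by a ``parallel argument with one extra power of $p$'' is not justified as written, since then $\dim\mcC=p^{4}m'$ and simples of dimension $p^{2}$ are permitted, breaking the clean stabilizer dichotomy. The paper does not attempt a parallel argument there either: for $\dim\mcC=p^{4}$ it instead invokes that such a category is a Drinfeld center $\mcZ(\Vec_{G}^{\omega})$ with $|G|=p^{2}$, hence pointed.
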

  \begin{proof}
  The case $m=1$ (i.e. $p^3$) is well-known and easily verified using the grading and $|\mcU(\mcC)|=\rank\mcC_{\pt}$.
  
  Assume that $\mcC$ is not pointed.
 First we consider $\dim\mcC=p^4$.  By \cite[Lemma 4.9]{DN} $\mcC$ is a Drinfeld center $\mcZ(\Vec_G^\omega)\cong \Rep(D^\omega G)$ for some group $G$ of order $p^2$.  By \cite{Ng03} such a $D^\omega G$ is commutative, i.e. $\mcC$ is pointed.
  
    Since $\mcC$ is modular we know that the simples have dimensions $1$ and/or
    $p$. In particular, there is an integer $b$ such that
    $p^{3}m=\dim\mcC_{\pt}+p^{2}b$ and thus $p^{2}\mid \dim\mcC_{\pt}$. By
    \lemmaref{Lemma: pkm} we may assume $p^{3}$ does not divide $\dim\mcC_{\pt}$.
    In particular, there exists an integer $k$ such that $k\mid m$ and
    $\dim\mcC_{\pt}=p^{2}k$. Next let $\mcC_{i}$ denote the components of the
    universal grading of $\mcC$.  Then $pm/k=\dim\mcC_{i}=a_{i}+b_{i}p^{2}$ where
    $a_{i}$ is the number of invertibles in component $\mcC_{i}$ and $b_{i}$ is
    the number of dimension $p$ objects in $\mcC_{i}$. So we can conclude that
    $p\mid a_{i}$ and $a_{i}\neq 0$. Thus $\dim\mcC_{\pt}=\sum_{i}a_{i}\geq
    \sum_{i}p=p\dim\mcC_{\pt}$, an impossibility.
  \end{proof}

  Next we note that the condition that $\mcC$ is integral in the previous
  statements is vacuous if $p$ is odd. This is made explicit by the
  following lemma.
  \begin{lemma}
    \label{Lemma: 4 divides}
    If $\mcC$ is a strictly weakly integral modular category, then
    $4\mid\dim\mcC$.
  \end{lemma}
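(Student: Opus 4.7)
The plan is to track the power of $2$ in $\dim\mcC$ via the $GN$-grading of $\mcC$ by an elementary abelian $2$-group $E$, splitting into two cases according to the size of $E$. Because $\mcC$ is strictly weakly integral, $\mcC_{\intcat}$ is a proper subcategory, so the grading is non-trivial and $\abs{E}=2^{e}$ with $e\geq 1$. Since the grading is faithful, $\dim\mcC=\abs{E}\cdot\dim\mcC_{\intcat}$; in particular $\abs{E}$ divides $\dim\mcC$.

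If $e\geq 2$ then $4\mid\abs{E}\mid\dim\mcC$ and we are done with no further work. The real case is $e=1$, where $\dim\mcC=2\dim\mcC_{\intcat}$ and it suffices to show $\dim\mcC_{\intcat}$ is even.

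For this, I would exploit that every faithful grading is a quotient of the universal grading, giving a surjection $\mcU\paren{\mcC}\twoheadrightarrow\mbbZ_{2}$. Invoking modularity converts this into group-theoretic data: since $\mcU\paren{\mcC}\cong\widehat{G\paren{\mcC}}$ as recalled in the preliminaries, Pontrjagin dualizing the surjection yields an injection $\mbbZ_{2}\hookrightarrow G\paren{\mcC}$. Hence $\dim\mcC_{\pt}=\abs{G\paren{\mcC}}$ is even. Because invertible objects are integral, $\mcC_{\pt}\subset\mcC_{\intcat}$, and the standard divisibility theorem for fusion subcategories then gives $\dim\mcC_{\pt}\mid\dim\mcC_{\intcat}$. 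Thus $\dim\mcC_{\intcat}$ is even and $4\mid\dim\mcC$.

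The only subtle point is the passage from $e=1$ to an honest order-$2$ invertible, which is precisely where modularity is used; once the dualization step is in hand the result follows by combining faithfulness of the grading with fusion-subcategory divisibility.
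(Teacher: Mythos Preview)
Your proof is correct and follows the same route as the paper's, which condenses everything into the single chain $2\mid \dim\mcC_{\pt}\mid\dim\mcC_{\intcat}\mid\dim\mcC/2$; the first divisibility is precisely your universal-grading/duality argument (left implicit there) and the last comes from the GN-grading. Your case split on $e$ is an unnecessary but harmless organizational choice.
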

\begin{proof}
   Coupling the weakly integral grading of \cite{GN2} and the fact that
   $\mcC$ is strictly weakly integral, we have $2\mid
   \dim\mcC_{\pt}\mid\dim\mcC_{\intcat}\mid\dim\mcC/2$.  \end{proof}

  \lemmaref{Lemma: 4 divides}, \corref{Cor: p2m integral}, and \cite[Theorem
  3.1]{BGNPRW1} resolve the case of $\mcC$ a weakly integral modular
  category of dimension $p^{2}m$ with $m$ square-free and coprime to $p$. We
  have:
  \begin{theorem}
    \label{Theorem: p2m main theorem}
    If $p$ is a prime, $m$ is a square-free integer coprime to
    $p$, and $\mcC$ is non-pointed modular category of dimension $p^{2}m$,
    then $p=2$ and one of the following is true:
    \begin{itemize}
      \item[(i)]  $\mcC$ contains an object of dimension $\sqrt{2}$ and is
      equivalent to a Deligne product of an Ising modular category with a cyclic
      modular category or
      \item[(ii)] $\mcC$ contains no objects of dimension $\sqrt{2}$ and is
      equivalent to a Deligne product of a $\mbbZ_{2}$-equivariantization of a
      Tambara-Yamagami category over $\mbbZ_{k}$ and a cyclic modular category
      of dimension $n$ where $1\leq n=m/k\in\mbbZ$.
    \end{itemize}
  \end{theorem}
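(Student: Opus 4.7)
The proof plan is essentially a reduction to the cited theorem \cite[Theorem 3.1]{BGNPRW1}, which already classifies non-pointed modular categories of dimension $4m$ with $m$ odd square-free into cases (i) and (ii). So the only genuinely new work is to eliminate the case of odd $p$.

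First I would dispose of the odd $p$ case by a dichotomy on whether $\mcC$ is integral. If $\mcC$ is integral of dimension $p^{2}m$, then \corref{Cor: p2m integral} already forces $\mcC$ to be pointed, contradicting our hypothesis. If instead $\mcC$ is strictly weakly integral, then \lemmaref{Lemma: 4 divides} gives $4\mid p^{2}m$; since $p$ is odd this forces $4\mid m$, contradicting the square-freeness of $m$. Either way, $p$ odd is impossible, so $p=2$.

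Next, with $p=2$ in hand, the hypothesis $\gcd(m,p)=1$ guarantees $m$ is odd, and $m$ is square-free by assumption. Hence $\dim\mcC=4m$ with $m$ odd and square-free, which is precisely the setting of \cite[Theorem 3.1]{BGNPRW1}. I would then quote that theorem directly: the non-pointed modular categories of this dimension split into the two Deligne-product families, sorted by whether a simple object of dimension $\sqrt{2}$ appears. Presence of a $\sqrt{2}$ object places $\mcC$ in case (i) (Ising $\boxtimes$ cyclic), while its absence places $\mcC$ in case (ii) (a $\mbbZ_{2}$-equivariantized Tambara-Yamagami factor $\boxtimes$ a cyclic factor of the complementary odd dimension $n=m/k$).

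The ``hard part'' is therefore entirely contained in the prior literature; the new content here is just the elementary observation that the odd prime case collapses using the parity constraint from the GN-grading (\lemmaref{Lemma: 4 divides}) together with the integral classification (\corref{Cor: p2m integral}). Should a reader desire a self-contained treatment of \cite[Theorem 3.1]{BGNPRW1}, the main obstacle would be the $\sqrt{2}$-free case, where one must identify the non-trivial component of the GN-grading as a Tambara-Yamagami category of $\mbbZ_{k}$ type and then pair it with its cyclic complement via a Deligne factorization argument; but for the present statement no such reproof is needed.
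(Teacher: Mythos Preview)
Your proposal is correct and follows essentially the same approach as the paper: the paper simply remarks that \lemmaref{Lemma: 4 divides}, \corref{Cor: p2m integral}, and \cite[Theorem 3.1]{BGNPRW1} together yield the result, and you have accurately spelled out how these three ingredients combine.
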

Applying
  Lemmas \ref{Lemma: pkm} and \ref{Lemma: integral p3m} we have:
  \begin{cor}
    If $\mcC$ is a modular category of dimension $8m$ with $m$ an odd square-free
    integer, then either $\mcC$ is pointed or $\mcC$ is strictly weakly integral
    and $8\ndiv\abs{\mcU\(\mcC\)}$.
  \end{cor}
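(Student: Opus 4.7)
The plan is to argue by cases on whether $\mcC$ is integral, and then suppose for contradiction that $\mcU(\mcC)$ is divisible by $8$ in the non-integral case. Everything follows from the three ingredients already established earlier in the section: \lemmaref{Lemma: pkm} with $p=2$ and $k=3$, \lemmaref{Lemma: integral p3m}, and \lemmaref{Lemma: 4 divides}, together with the standard fact that the universal grading group of a modular category is canonically isomorphic to the character group of $G(\mcC)$, so that $|\mcU(\mcC)| = |G(\mcC)| = \dim \mcC_{\pt}$.

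First I would suppose $\mcC$ is not pointed and show that $\mcC$ must be strictly weakly integral. Since $\dim \mcC = 8m \in \mbbZ$, the category is weakly integral. If it were integral, then by \lemmaref{Lemma: integral p3m} applied with $p=2$ (so that the dimension is $p^{3}m$ with $m$ square-free) we would conclude that $\mcC$ is pointed, a contradiction. Thus $\mcC$ is not integral, hence is strictly weakly integral.

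Next I would show $8 \ndiv |\mcU(\mcC)|$. Suppose for contradiction that $8 \mid |\mcU(\mcC)|$. Since $|\mcU(\mcC)| = \dim \mcC_{\pt}$, this means $2^{3} \mid \dim \mcC_{\pt}$. Now apply \lemmaref{Lemma: pkm} with $p=2$ and $k=3$: the hypothesis $p^{k} \mid \dim \mcC_{\pt}$ is met, so the lemma forces $\mcC$ to be pointed, contradicting our assumption.

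No step looks like a real obstacle — the argument is essentially a bookkeeping exercise that extracts the $8m$ case from the two lemmas. The only thing worth double-checking is the identification $|\mcU(\mcC)| = \dim \mcC_{\pt}$, which follows because $\mcU(\mcC) \cong \widehat{G(\mcC)}$ for modular $\mcC$ by \cite{GN2} and because every invertible object has dimension $1$, so $\dim \mcC_{\pt} = |G(\mcC)|$. One could also appeal directly to \lemmaref{Lemma: 4 divides} to reconfirm that a non-integral $\mcC$ of dimension $8m$ has $4 \mid \dim \mcC$, but this is already guaranteed by the form of the dimension.
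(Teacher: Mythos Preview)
Your proposal is correct and follows exactly the route the paper takes: the paper simply states that the corollary follows by applying Lemmas~\ref{Lemma: pkm} and~\ref{Lemma: integral p3m}, and you have spelled out precisely how those two lemmas yield the claim. The only extra ingredient you make explicit, namely $|\mcU(\mcC)|=\dim\mcC_{\pt}$ via $\mcU(\mcC)\cong\widehat{G(\mcC)}$, is recorded in the preliminaries and is implicit in the paper's one-line justification.
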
 
  
  So to study weakly integral modular categories of dimension $p^{3}m$ it
  suffices to study strictly weakly integral modular categories of dimension
  $8m$ by Lemmas \ref{Lemma: 4 divides} and \ref{Lemma: integral p3m}.

  \begin{lemma}
    \label{Lemma: Equivariantizations contain Ising}
    Let $G$ be a finite group and $\mcC$ be the $G$-equivariantization of a fusion category $\mcD$ which contains an Ising category. If $\mcC$ has the
    property that every
    $X\in\Irr\(\mcC\)$ such that $\dim X\in\mbbZ\[\sqrt{2}\]$ actually has
    dimension $\sqrt{2}$, then $\mcC$ contains an Ising category.
  \end{lemma}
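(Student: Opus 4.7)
The plan is to use the dimension hypothesis to lift the dimension-$\sqrt{2}$ Ising anyon of $\mcD$ into a simple object of $\mcC$, deduce that the Ising subcategory $\mathcal{I}\subset\mcD$ is $G$-stable, and then identify the desired Ising subcategory of $\mcC$ inside the restricted equivariantization $\mathcal{I}^{G}\subset\mcC$.

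First I would apply the standard description of simple objects in a $G$-equivariantization (see e.g.\ \cite{BuN1}) to the non-invertible simple $\sigma$ of an Ising subcategory $\mathcal{I}\subset\mcD$ (with $\dim\sigma=\sqrt{2}$). The simples $\tilde\sigma\in\Irr(\mcC)$ whose forgetful image $F(\tilde\sigma)\in\mcD$ contains $\sigma$ have FP-dimensions of the form $[G:G_{\sigma}]\cdot\dim(\rho)\cdot\sqrt{2}$, where $G_{\sigma}$ is the $G$-stabilizer of $\sigma$ and $\rho$ is an irreducible $\omega_{\sigma}$-projective representation of $G_{\sigma}$. Since these lie in $\mbbZ[\sqrt{2}]$, the hypothesis forces $[G:G_{\sigma}]\dim(\rho)=1$; in particular, $\sigma$ is $G$-stable and there is a simple lift $\tilde\sigma\in\Irr(\mcC)$ with $F(\tilde\sigma)=\sigma$ and $\dim\tilde\sigma=\sqrt{2}$. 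Writing $\tilde\sigma\otimes\tilde\sigma^{*}=\1\oplus\tilde\chi$ then produces an invertible $\tilde\chi\in\mcC$ with $F(\tilde\chi)=\chi_{\mathcal{I}}$, the Ising fermion; applying the same dimension argument to $\chi_{\mathcal{I}}$ shows that $\chi_{\mathcal{I}}$ is also $G$-stable. Hence $\mathcal{I}\subset\mcD$ is $G$-stable as a fusion subcategory.

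Next I would restrict the $G$-action to $\mathcal{I}$. Because the three simples of $\mathcal{I}$ have pairwise distinct FP-dimensions, every braided tensor autoequivalence of $\mathcal{I}$ fixes each simple object up to isomorphism, so the induced $G$-action on $\mathcal{I}$ factors through the group of tensor natural automorphisms of the identity functor, $\mathrm{Aut}^{\otimes}(\mathrm{id}_{\mathcal{I}})$. A direct analysis of equivariantization by such a gauge-valued action gives that $\mathcal{I}^{G}$ is equivalent as a braided fusion category to a Deligne product $\mathcal{I}\boxtimes\Rep^{\omega}(G)$ for some cocycle $\omega\in H^{2}(G,k^{\times})$, and so contains $\mathcal{I}$ as its first Deligne factor. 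The canonical inclusion $\mathcal{I}^{G}\hookrightarrow\mcD^{G}=\mcC$ then exhibits an Ising subcategory of $\mcC$, as desired.

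The main obstacle will be establishing the last step rigorously: verifying the Deligne factorization $\mathcal{I}^{G}\simeq\mathcal{I}\boxtimes\Rep^{\omega}(G)$ when $G$ acts via the non-trivial gauge subgroup of $\mathrm{Aut}^{\otimes}(\mathrm{id}_{\mathcal{I}})$, and confirming that the resulting subcategory is genuinely a (modular, rank-$3$) Ising category. This amounts to a careful cocycle analysis, carried out via the explicit description of simples of $\mathcal{I}^{G}$ as pairs $(X,\rho)$ with $X\in\Irr(\mathcal{I})$ and $\rho$ an irreducible projective representation of $G$, together with a direct verification of the induced fusion and braiding data on the rank-$3$ subcategory generated by $\tilde\sigma$ and $\tilde\chi$.
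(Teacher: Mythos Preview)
Your first paragraph matches the paper's argument exactly: pick a simple $S_{\sigma,\rho}$ in $\mcC=\mcD^G$ lying over the Ising anyon $\sigma$, compute $\dim S_{\sigma,\rho}=[G:G_\sigma]\dim(\rho)\sqrt{2}$, and invoke the hypothesis to force this to equal $\sqrt{2}$.  Where you diverge is in what you do next.  The paper's key move---which you omit---is to choose $\rho$ to be a \emph{self-dual} projective representation of $\Stab_G(\sigma)$ (such a choice exists by \cite{BuN1} since $\sigma\cong\sigma^*$).  Then $S_{\sigma,\rho}$ is itself self-dual, and a self-dual simple of dimension $\sqrt{2}$ automatically generates an Ising fusion subcategory: $S_{\sigma,\rho}^{\otimes 2}=\1\oplus z$ with $z$ invertible and $z\cong z^*$, so $z^{\otimes 2}=\1$.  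That is the whole proof.

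Your detour through $G$-stability of $\mathcal I$, restriction of the action, and a putative Deligne factorization $\mathcal I^G\simeq\mathcal I\boxtimes\Rep^\omega(G)$ is unnecessary and, as you yourself flag, not obviously correct.  Two specific issues: (i) $\mcD$ is only assumed to be a fusion category, so the restricted $G$-action on $\mathcal I$ is by tensor (not braided tensor) autoequivalences, and your appeal to braided autoequivalences is unjustified; (ii) even once you know each $\rho_g$ fixes the three simples up to isomorphism, it does not immediately follow that $\rho_g\cong\mathrm{id}_{\mathcal I}$ as tensor functors, nor that the equivariantization splits as a Deligne product---this is exactly the cocycle analysis you defer, and it is genuinely delicate.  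The self-duality trick bypasses all of this: once $\tilde\sigma$ is self-dual of dimension $\sqrt{2}$, the fusion subcategory $\langle\tilde\sigma\rangle\subset\mcC$ has the Ising fusion rules on the nose, with no need to compare it to $\mathcal I$ or analyze the action.
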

  \begin{proof}
    By \cite{BuN1}, the objects in $\mcC$ are pairs $S_{X,\p}$ where $X$ is
    the $G$-orbit of a simple in $\mcD$ and $\p$ is a projective
    representation of $\Stab_{G}\(X\)$. Now let $\s$ be the Ising object in
    $\mcD$. Then there exists a projective representation of $\Stab_{G}\(\s\)$
    that is self-dual, denote it by $\r$. Then $S_{\s.\r}$ is a simple in
    $\mcC$ and is self-dual by \cite{BuN1}. Moreover, $\dim
    S_{\s,\r}=\sqrt{2}\dim\r\[G:\Stab_{G}\(X\)\]$. However, our hypotheses
    regarding $\mcC$ ensure that $\dim S_{\s,\r}=\sqrt{2}$.
  \end{proof}
  The next two lemmas deal with the cases $\dim\mcC=8$ or $16$, i.e. $m=1,2$.
  \begin{lemma}
    \label{Cor: Dimension 8 SWI Fusion Contains Ising}
    If $\mcC$ is a dimension $8$ strictly weakly integral premodular category,
    then $\mcC\cong\mcI\boxtimes\mcD$ where $\mcD$ is a $\mathbb Z_2$-cyclic premodular category.
  \end{lemma}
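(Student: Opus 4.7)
The plan is to use the GN grading to pin down the structure of $\mcC$, identify an Ising modular subcategory, and then apply M\"uger's decomposition theorem.

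Applying the GN grading of \cite{GN2} to the strictly weakly integral category $\mcC$ yields a decomposition $\mcC = \mcC_\intcat \oplus \mcC_{\sqrt{2}}$ with each summand of dimension $4$. Elementary dimension counting (with $\1 \in \mcC_\intcat$ and all dimensions in $\mcC_\intcat$ being positive integers) forces $\mcC_\intcat$ to be pointed with $\abs{G\paren{\mcC}} = 4$, and the square-sum constraint on $\mcC_{\sqrt{2}}$ forces exactly two simple objects $\sigma_1, \sigma_2$ of dimension $\sqrt{2}$. Orbit-stabilizer for the $G\paren{\mcC}$-action on $\{\sigma_1,\sigma_2\}$ then produces an involution $g \in G\paren{\mcC}$ with $\Stab\paren{\sigma_1} = \{\1, g\}$ and $\sigma_1 \otimes \sigma_1^* \cong \1 \oplus g$, marking $g$ as a candidate Ising fermion.

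The key step is to establish that $\mcC$ contains an Ising fusion subcategory. When $\mcC$ is modular, \thmref{Theorem: p2m main theorem} (applied with $p^2 m = 2^2 \cdot 2 = 8$) directly produces the desired decomposition. When $\mcC$ is not modular, its M\"uger center $\mcC'$ is a non-trivial symmetric subcategory sitting inside $\mcC_\intcat$. In the case that $\mcC'$ is Tannakian one can de-equivariantize by $\mbbZ_2 \subseteq \mcC'$, obtaining a strictly weakly integral modular category of dimension at most $4$, which must be an Ising modular category by the classification of small SWI modular categories; \lemmaref{Lemma: Equivariantizations contain Ising} then lifts an Ising subcategory back to $\mcC$. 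A parallel argument in the super-Tannakian case handles the remaining possibility. Alternatively, one shows directly that $\sigma_1$ is self-dual by ruling out the alternative $\sigma_1^* \cong \sigma_2$ (which would force $\sigma_1^{\otimes 2} \cong h_1 \oplus h_2$ for the non-stabilizer invertibles): this is incompatible with the structure of the universal grading, which has order $\dim\mcC/\dim\mcC_\ad = 4$ and must reconcile the placement of the self-dual and invertible objects with the balancing relation $\th_X\th_YS_{X,Y} = \sum_c N^c_{X^*,Y}d_c\th_c$. With self-duality in hand, $\langle \1, g, \sigma_1 \rangle$ is a fusion subcategory with Ising fusion rules, hence by the standard classification one of the eight Ising modular categories. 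This case analysis is the main obstacle.

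Once an Ising modular subcategory $\mcI \subseteq \mcC$ is established, M\"uger's theorem gives $\mcC \cong \mcI \boxtimes C_\mcC\paren{\mcI}$. The centralizer $C_\mcC\paren{\mcI}$ has dimension $8/4 = 2$ and rank $2$, and is therefore a pointed premodular category with fusion ring $\mbbZ_2$, i.e., a $\mbbZ_2$-cyclic premodular category, completing the proof.
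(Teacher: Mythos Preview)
Your setup (GN-grading forcing four invertibles and two simples $\sigma_1,\sigma_2$ of dimension $\sqrt{2}$, with $\sigma_1\otimes\sigma_1^*\cong\1\oplus g$) and your endgame (M\"uger factorization once an Ising modular subcategory is found) both match the paper.  The gap is in the middle.

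Your invocation of \thmref{Theorem: p2m main theorem} for the modular case is illegitimate: that theorem requires $\gcd(p,m)=1$, and $8=2^2\cdot 2$ forces $p=m=2$.  In the Tannakian branch you have not checked that the boson you de-equivariantize by is distinct from the stabilizer $g$; if it equals $g$ then $F(\sigma_1)$ splits into simples of dimension $\sqrt{2}/2$, which is not an algebraic integer---the paper uses exactly this obstruction, but as a tool to \emph{rule out} configurations, not as something one can simply assume away.  Most seriously, the phrase ``a parallel argument in the super-Tannakian case'' is not a proof: when $\mcC'=\sVec$ there is no Tannakian subcategory to de-equivariantize by, and this case (together with the modular case you have not actually handled) is precisely where the work lies.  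The paper dispatches it by assuming $\sigma_1^*\cong\sigma_2$ and running a case analysis on the twists $\theta_z,\theta_a,\theta_b$ of the three non-trivial invertibles: balancing rules out $\theta_z=1$ via the $\sqrt{2}/2$ obstruction, forces $\theta_a\neq\theta_b$ via a rank-divisibility contradiction $4\nmid 6$, then eliminates bosons among $a,b$ (de-equivariantize and apply \lemmaref{Lemma: Equivariantizations contain Ising} to contradict $\sigma_1\ncong\sigma_1^*$), eliminates semions (factor off $\Sem$, same contradiction), and finally shows the all-fermion configuration makes $\mcC_{\intcat}$ modular, again contradicting $4\nmid 6$.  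Your ``alternatively'' paragraph gestures at a grading/balancing argument but supplies no actual mechanism for deriving a contradiction from $\sigma_1^*\cong\sigma_2$; as written it is not a proof.
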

  \begin{proof}
     Under the GN-grading (either $(\mbbZ/2\mbbZ)^2$ or $\mbbZ/2\mbbZ$), $\mcC_0=\mcC_{\intcat}$ so $\dim\mcC_{\intcat}=2$ or $4$ (respectively). The first case would yield 3 objects of dimension $\sqrt{2}$, which is inconsistent with the GN-grading. Thus $\dim\mcC_{\intcat}=4$ and we conclude that $\mcC_{\intcat}=\mcC_{\pt}$ and there are two
distinct objects $X$ and $Y$ of dimension $\sqrt{2}$.

Observe that if \textit{either} the universal grading group $\mcU\(\mcC\)\cong(\mbbZ/2\mbbZ)^2$ or $X\cong X^*$ then $X^{\otimes 2}=\1\oplus z\in\mcC_{\ad}$ with $z\not\cong\1$ and $z^*\cong z$.  Thus $X,\1,z$ form a braided Ising category $\mcI$ by \cite{DGNO1}.  Now it follows from a dimension count and \cite[Cor. 7.8]{M2} that $\mcC\cong\mcI\boxtimes\mcD$ as claimed.

Thus it is enough to consider the case that $X^*\cong Y$.  Then we have $X\otimes Y\cong \1\oplus z$ for some self-dual $z\in\mcC_{\ad}$ with $\theta_z^4=1$ (i.e. $z$ is a boson, a fermion or a semion).  If $\theta_z=1$ then $\langle z\rangle$ is Tannakian so that we may de-equivariantize $\mcC$.  However, since $(\1\oplus z)\otimes X\cong 2X$, under the de-equivariantization functor $F$ we have $F(X)=X_1\oplus X_2$ with $\dim(X_i)=\frac{\sqrt{2}}{2}$ (see \cite[Prop. 2.15]{M3}) which is not an algebraic integer.  So $\theta_z\neq 1$.  Now if the remaining two invertible objects satisfy $\theta_a=\theta_b$, the balancing equation implies  $S_{z,b}=S_{z,a}=\frac{1}{\theta_z}\neq 1$  hence the M\"uger centers of both $\mcC$ and $\mcC_{\intcat}$ are trivial.  But by \cite{M2} we would then have $\rank{\mcC_{\intcat}}=4\mid \rank{\mcC}=6$, a contradiction.  Thus we must have $\theta_a\neq\theta_b$ (so $a\ncong b^*$) and $\theta_z\neq 1$.  Now if either $a$ or $b$ is a boson we may de-equivariantize and then apply \lemmaref{Lemma: Equivariantizations contain Ising} to conclude that $X$ generates an Ising category, contradicting $X\ncong X^*$.  So each of $z,a$ and $b$ are fermions or semions. Now applying balancing and the fact that $a\otimes X\cong X^*\cong b\otimes X$ we have $S_{z,X}=\frac{d_X}{\theta_z}\neq d_Xd_z$, $S_{a,X}=\frac{d_X}{\theta_a}\neq d_Xd_a$ and $S_{b,X}=\frac{d_X}{\theta_b}\neq d_Xd_b$ so that $\mcC$ is modular.  Now if any of $a,b$ or $z$ is a semion we may factor $\mcC$ as a Delinge product $\Sem\boxtimes \mcE$ again contradicting $X\ncong X^*$.  The only remaining possibility is that each of $a,b$ and $z$ is a fermion.  From the balancing equation we compute the $S$-matrix of $\mcC_{\intcat}$ and find that $S_{i,j}=-1$ for $\1\neq i\neq j\neq \1$ which implies $\mcC_{\intcat}$ is modular and we obtain the contradiction $4\mid 6$ as above.

  \end{proof}

  \begin{lemma}
    \label{Lemma: Dimension 16 SWI MTC Contains Ising}
    If $\mcC$ is a strictly weakly integral modular category of dimension $16$,
    then $\mcC$ contains an Ising subcategory. In particular,
    $\mcC\cong\mcI\boxtimes\mcD$ where $\mcD$ is Ising or a pointed modular
    category of dimension $4$.
  \end{lemma}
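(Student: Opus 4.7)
The plan is to produce an Ising modular subcategory $\mcI\subseteq\mcC$; once this is done, since $\mcI$ is itself modular, \cite[Cor.~7.8]{M2} gives $\mcC\cong\mcI\boxtimes\mcD$ with $\mcD:=C_{\mcC}(\mcI)$ modular of dimension $4$, and a dimension-$4$ modular category is pointed or Ising by the low-rank classification in \cite{RSW}, matching the stated alternative. To locate $\mcI$, I would first analyse the $GN$-grading $\mcC=\bigoplus_{x\in E}\mcC(n_x)$ from \cite[Theorem~3.10]{GN2}: since $\mcC$ is strictly weakly integral of dimension $16$, $E$ is a nontrivial elementary abelian $2$-group, so $|E|\in\{2,4\}$ and $\dim\mcC_{\intcat}\in\{4,8\}$. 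If $\dim\mcC_{\intcat}=4$ then $\mcC_{\intcat}$, being integral of dimension $4$, is forced to be pointed (the only possible dimension vector is four $1$'s), and each of the three non-trivial components has dimension $4$ with $n_x=2$, hence consists of exactly two simples of dimension $\sqrt{2}$. If $\dim\mcC_{\intcat}=8$, the unique non-trivial component has dimension $8$ and consists either of four simples of dimension $\sqrt{2}$ or of a single simple of dimension $2\sqrt{2}$.

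Next I would produce a self-dual simple $\sigma$ of dimension $\sqrt{2}$ with $\sigma^{\otimes 2}\cong\1\oplus\psi$ and $\psi$ a fermion; by the characterization of Ising used in \corref{Cor: Dimension 8 SWI Fusion Contains Ising}, $\langle\sigma\rangle\cong\mcI$ is then Ising. Self-duality of some $\sqrt{2}$-simple would follow from the $G(\mcC)$-orbit structure on the finite set of $\sqrt{2}$-dimensional simples, together with the fact that duality commutes with the $G(\mcC)$-action; the orbit sizes available in each GN-configuration force at least one fixed point under duality. Once $\sigma\cong\sigma^{*}$, $\sigma^{\otimes 2}=\1\oplus\psi$ for a unique invertible $\psi$, and the restrictions $\theta_\sigma^{8}=1$ plus the balancing relation cut $\theta_\psi$ down to $\{\pm 1,\pm i\}$. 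Arguments essentially identical to those in the proof of \corref{Cor: Dimension 8 SWI Fusion Contains Ising} then exclude the remaining bad possibilities: $\theta_\psi=1$ makes $\langle\psi\rangle$ Tannakian, and the image of $\sigma$ under de-equivariantization has non-algebraic-integer dimension as in \cite[Prop.~2.15]{M3}, while $\theta_\psi=\pm i$ produces a $\Sem$-factor incompatible with $\sigma\cong\sigma^{*}$. Hence $\psi$ is a fermion and an Ising subcategory $\mcI=\langle\sigma\rangle\subset\mcC$ is obtained.

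The main obstacle I anticipate is the degenerate configuration in the second case above, with a unique simple of dimension $2\sqrt{2}$ in the non-trivial $GN$-component: here every invertible of $\mcC$ must fix that simple, forcing $\dim\mcC_{\pt}\mid 8$ and heavily constraining the decomposition of $\sigma\otimes\sigma^{*}$ inside the integral subcategory of dimension $8$; an analysis in the spirit of \lemmaref{Lemma: integral p3m}, combined with the universal grading structure of \cite{GN2}, should either rule this case out or else force a $\sqrt{2}$-simple to appear elsewhere in $\mcC$. A uniform alternative, should the direct orbit argument prove delicate, is to pick a boson $\gamma\in\mcC_{\pt}$ (whose existence is a separate small argument using twists on the $2$-group of invertibles), form the $\mbbZ_{2}$-crossed braided fusion category $\mcC_{\langle\gamma\rangle}$ whose trivial component is modular of dimension $4$ and therefore pointed or Ising, and in the Ising case apply \lemmaref{Lemma: Equivariantizations contain Ising} to transplant the Ising back to $\mcC$; excluding the pointed alternative for this dimension-$4$ modular quotient is then the residual obstruction. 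Either way, once an Ising subcategory is in hand, \cite[Cor.~7.8]{M2} yields the factorization $\mcC\cong\mcI\boxtimes\mcD$ and the identification of $\mcD$ completes the proof.
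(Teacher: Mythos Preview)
Your GN-grading analysis is slightly off: every non-integral simple dimension $d$ with $d^2\mid 16$ lies in $\mbbZ\sqrt{2}$, so there is only one nontrivial $n_x$ and hence $|E|=2$ is forced.  The case $\dim\mcC_{\intcat}=4$ never occurs.

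More importantly, the case you flag as the ``main obstacle'' (a single simple of dimension $2\sqrt{2}$) is in fact the easiest: that simple fills an entire universal-grading component of dimension $8$, so $|\mcU(\mcC)|=2$, and then the equation $8=\dim\mcC_{\intcat}=|G(\mcC)|+4b=2+4b$ has no integer solution.  Conversely, the genuine gap is in your claim that ``orbit sizes \dots\ force at least one fixed point under duality.''  With $|\mcU(\mcC)|=4$ one has four $\sqrt{2}$-simples partitioned into two $G(\mcC)$-orbits of size $2$, and nothing in your argument prevents duality from swapping the two members of each orbit.  The dimension-$8$ lemma in the paper spends an entire paragraph eliminating precisely the non-self-dual configuration; you would have to replicate that work here, and your orbit remark does not do so.  Your de-equivariantization fallback is, as you acknowledge, also incomplete.

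The paper organizes the argument differently and sidesteps the self-duality issue entirely.  It cases on $|\mcU(\mcC)|\in\{2,4,8\}$ rather than on the GN-grading.  The order-$2$ case is excluded as above.  For order $8$ every component has dimension $2$, so $\mcC$ is a modular generalized Tambara--Yamagami category and \cite[Theorem~5.4]{Nat} gives $\mcC\cong\mcI\boxtimes\mcD$ with $\mcD$ pointed of dimension $4$.  For order $4$ one finds $\mcC_{\ad}=\mcC_{\pt}$, a unique (hence self-dual) simple of dimension $2$, and $\mcC_{\intcat}$ is a braided Tambara--Yamagami category; Siehler's theorem \cite[Theorem~1.2]{Si1} then forces $G(\mcC)\cong\mcU(\mcC)\cong\mbbZ_2\times\mbbZ_2$.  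Now $\mcC_{\ad}$ together with one non-integral component is a dimension-$8$ strictly weakly integral premodular subcategory, and \corref{Cor: Dimension 8 SWI Fusion Contains Ising} applies to it directly.  The payoff of the paper's route is that the delicate exclusion of non-self-dual $\sqrt{2}$-objects is done once, in the dimension-$8$ lemma, and then reused; your route would require redoing that analysis inside $\mcC$ itself.
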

  \begin{proof}
 The squares of the dimensions of the simple objects must divide $16$ and so the possible simple object dimensions are $1$, $2$, $\sqrt{2}$, and $2\sqrt{2}$. In particular, the GN grading is $\mbbZ/2\mbbZ$ and the dimension of the integral component is $8$. 
 
 There are now five possibilities for the universal grading group:
    $\mbbZ/2\mbbZ$, $\mbbZ/4\mbbZ$, $\(\mbbZ/2\mbbZ\)^{2}$, $\(\mbbZ/2\mbbZ\)^{3}$, or $\mbbZ/8\mbbZ$.
    The first case is not possible by dimension count in the integral
    component. 
    
    In the case that the universal grading has order $8$, each component has dimension $2$. Then the integral components are rank $2$ pointed categories and the non-integral components each have a single simple object of dimension $\sqrt{2}$. Then the category is a modular generalized Tambara-Yamagami category and $\mcC\cong\mcI\boxtimes\mcD$ where $\mcD$ is a pointed modular category of dimension $4$ by \cite[Theorem 5.4]{Nat}.
    
    In the case that the universal grading has
    order $4$, dimension count reveals that there are four invertible objects, one object of dimension $2$, and four objects of dimension $\sqrt{2}$. Moreover, $\mcC_{\ad} = \mcC_{\pt}$. Since there is only one simple object of dimension $2$, it is self-dual. Moreover, $\mcC_{\intcat}$ is a braided Tambara-Yamagami category, and therefore $G(\mcC)\cong U(\mcC) \cong \mathbb Z_2\times \mathbb Z_2$, by \cite[Theorem 1.2]{Si1}. Then the fusion subcategory generated by the adjoint component and one non-integral component has dimension $8$. Then, by \corref{Cor: Dimension 8 SWI Fusion Contains Ising}, it contains Ising and the result follows.
    


  \end{proof}

  \begin{prop}
    \label{prop: prime or classified}
    If $\mcC$ is a strictly weakly integral modular category of dimension $8m$
    where $m$ is an odd square-free integer, then one of the following is true:
    \begin{itemize}
      \item[(i)] $\mcC$ is a Deligne product of
      a prime modular category
      of dimension $8\ell$ and a pointed $\mbbZ_k$-cyclic modular
      category with $k$ odd, or
      \item[(ii)] $\mcC=\Sem\boxtimes\mcD$, where $\Sem$ is a Semion category
      and $\mcD$ is a strictly weakly integral modular category of dimension
      $4m$ (see \cite{BGNPRW1}).
    \end{itemize}
  \end{prop}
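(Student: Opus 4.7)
The plan is to split into two mutually exclusive cases depending on whether $\mcC$ contains a semion among its invertibles, and to use M\"uger's factorization theorem to peel off the appropriate modular subcategory in each case.

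The crucial preliminary step I would establish is that $\mcC$ admits no Tannakian subcategory $\Rep(\mbbZ_p)$ for any odd prime $p$. If such an $\mcE$ existed, then $p\mid 8m$ forces $p\mid m$. Its centralizer $C_{\mcC}(\mcE)$ is premodular with M\"uger center precisely $\mcE$ (by a double-centralizer argument, since $\mcC$ is modular), so its modularization is modular of dimension $\dim C_{\mcC}(\mcE)/p = 8m/p^{2}$. Since $m$ is square-free and $p$ is odd, $p^{2}\ndiv 8m$, so this dimension is not an integer, a contradiction. Consequently, any invertible $g\in\mcC$ of odd prime order $p$ has $\theta_g\neq 1$, and because $b(g,g)=\theta_g^{2}\neq 1$, the pointed subcategory $\langle g\rangle$ of dimension $p$ is modular.

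In the first case, where some invertible $s\in\mcC$ is a semion ($\theta_s=\pm i$), then $\langle s\rangle\cong\Sem$ is a modular subcategory, and M\"uger's theorem gives $\mcC\cong\Sem\boxtimes C_{\mcC}(\Sem)$ with $\dim C_{\mcC}(\Sem)=4m$, yielding option (ii).

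Otherwise, I would peel off the full odd-order pointed part of $\mcC$. The odd Sylow subgroup $H$ of $G(\mcC)$ has square-free odd order $k\mid m$, hence is cyclic $\mbbZ_k$. Sylow-$p$ subcategories for distinct odd primes mutually centralize because their pairwise bilinear values are roots of unity of coprime orders, hence trivial; so $\langle H\rangle$ is a $\mbbZ_k$-cyclic pointed modular subcategory. M\"uger then gives $\mcC=\langle H\rangle\boxtimes\mcP$ with $\dim\mcP=8\ell$, $\ell=m/k$.

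The main obstacle is to show that $\mcP$ is prime. My plan is to suppose a non-trivial factorization $\mcP=\mcP_{1}\boxtimes\mcP_{2}$, write $\dim\mcP_{i}=2^{a_{i}}n_{i}$ with $a_{1}+a_{2}=3$ and $n_{1}n_{2}=\ell$, and exploit that $\mcP$ inherits neither odd invertibles nor semions. Taking WLOG $a_{1}\leq 1$: if $a_{1}=0$ then $\mcP_{1}$ is modular of odd square-free dimension $n_{1}>1$, hence pointed with odd invertibles, contradicting the absence of odd invertibles in $\mcP$; if $a_{1}=1$ and $n_{1}=1$ then $\mcP_{1}$ has dimension $2$, so $\mcP_{1}\cong\Sem$, contradicting no semion; and the remaining case $a_{1}=1$, $n_{1}>1$ requires invoking the classification of modular categories of dimension $2n_{1}$ with $n_{1}$ odd square-free to conclude $\mcP_{1}$ is pointed $\mbbZ_{2n_{1}}$-cyclic, whose $\mbbZ_{2}$-summand is a semion (since for a generator $g$ of $\mbbZ_{2n_1}$ we have $\theta_{g^{n_{1}}}=\theta_g^{n_{1}^{2}}$ of order $4$), again a contradiction. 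Hence $\mcP$ is prime and option (i) holds.
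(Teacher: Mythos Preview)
Your argument is correct, but it takes a different route from the paper's.  The paper proceeds by invoking M\"uger's \emph{full} prime factorization at the outset: writing $\mcC=\mcP_1\boxtimes\cdots\boxtimes\mcP_n$ into prime modular factors, \lemmaref{Lemma: 4 divides} forces exactly one factor (call it $\mcC_1$) to be strictly weakly integral (the unique one with $4\mid\dim$), and it is prime \emph{by construction}.  The remaining factors are integral of square-free dimension dividing $2m$, hence pointed; their product $\mcC_2$ is then pointed cyclic.  The dichotomy (i)/(ii) is read off from the parity of $\dim\mcC_2$: odd gives (i) directly, even forces a Semion inside the modular pointed $\mbbZ_2$-subcategory of $\mcC_2$.

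Your approach instead organizes the argument around the intrinsic question ``does $\mcC$ contain a semion?'', peels off either $\Sem$ or the full odd pointed part, and then verifies primality of the residual $8\ell$-dimensional factor by hand.  This is longer but has two virtues: it makes explicit that cases (i) and (ii) are distinguished exactly by the presence of a semion, and it isolates the no-odd-Tannakian obstruction (which the paper only uses later, in \corref{Z_4}).  The cost is the three-way case analysis on $(a_1,n_1)$; the paper avoids this entirely because primality of $\mcC_1$ comes for free from the prime decomposition.  One small correction: for an invertible $g$ one has $S_{g,g}=\theta_g^{-2}$, not $\theta_g^{2}$, though your conclusion $b(g,g)\neq 1$ is unaffected.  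You might also state explicitly in the $a_1=1$, $n_1>1$ sub-case that $\mcP_1$ is integral (since $4\nmid 2n_1$ rules out strictly weakly integral), which is what makes the square-free-dimension classification apply.
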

  \begin{proof}
    If $\mcC$ is prime then we are in case (i) with $k=1$.  If $\mcC$ is not
    prime, then it must factor into a Deligne product of modular categories
    \cite[Theorem 4.2]{M2}. By \lemmaref{Lemma: 4 divides} we can conclude that
    $\mcC=\mcC_{1}\boxtimes\mcC_{2}$, $\mcC_{1}$ is strictly weakly integral,
    and $\mcC_{2}$ is cyclic, pointed and non-trivial. The result now follows
    from earlier work \cite{RSW,BGNPRW1}: if $\dim\mcC_2$ is odd
    then we are in case (i), otherwise $2\mid \dim\mcC_2$, and hence $\mcC$
    contains a Semion category \cite{ACRW1}.
  \end{proof}

 We have now reduced to the case that $\mcC$ is a prime
  strictly weakly integral modular category of dimension $8m$ with $m$ and odd
  square-free integer, and we assume $\mcC$ has this form in what follows.
  \begin{prop}
    \label{semion}
    Suppose $\mcD$ is a non-symmetric premodular category that is Grothendieck
    equivalent to $\Rep\(\mbbZ_{2}\times\mbbZ_{2}\)$.  If  $\mcD$ contains the
    symmetric category $\Rep \(\mbbZ_2\)$, then $\mcD$ contains a Semion.
  \end{prop}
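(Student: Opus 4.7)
My plan is to combine the balancing equation with the pre-metric group description of pointed premodular categories. Label the four invertible simples as $G(\mcD)=\{\1,a,b,c\}$ with $c=ab$, and take $a$ to be the generator of $\Rep(\mbbZ_2)\subseteq\mcD$. Every non-trivial element of $\mbbZ_2\times\mbbZ_2$ is self-dual of order two, so each twist $\theta_x$ is a fourth root of unity and $\langle x\rangle$ is Grothendieck equivalent to a Semion modular category exactly when $\theta_x\in\{\pm i\}$. The task therefore reduces to exhibiting such an $x$ in $\{a,b,c\}$.

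Since $\langle a\rangle\cong\Rep(\mbbZ_2)$ is Tannakian (and, in the setting in which this proposition will be applied, sits inside the M\"uger center $\mcD'$), I assume that $a$ centralizes every simple and $\theta_a=1$. The balancing relation $\theta_a\theta_y S_{a,y}=\theta_{ay}$ combined with $S_{a,y}=d_ad_y=1$ then yields $\theta_{ay}=\theta_y$ for every simple $y$, and specializing to $y=b$ gives the key identity $\theta_b=\theta_c$.

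Next, suppose for contradiction that $\mcD$ contains no Semion, so $\theta_x\in\{\pm 1\}$ for all $x$. Since $\theta_b=\theta_c\in\{\pm 1\}$, the balancing relation gives $S_{b,c}=\theta_a/(\theta_b\theta_c)=\theta_b^{-2}=1$ and $S_{b,b}=\theta_b^{-2}=1$; combined with $S_{b,a}=1$ this forces $b\in\mcD'$, and the identical computation gives $c\in\mcD'$. Hence $\mcD'=\mcD$, contradicting the non-symmetry of $\mcD$. Therefore $\theta_b=\theta_c\in\{\pm i\}$, and $\langle b\rangle$ is the desired Semion subcategory.

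The substantive conceptual step is recognizing the M\"uger-central role of $\Rep(\mbbZ_2)$, which propagates $\theta_a=1$ to the constraint $\theta_b=\theta_c$ via the balancing equation; once this is in place, the dichotomy $\theta_x\in\{\pm 1\}$ versus $\theta_x\in\{\pm i\}$ reduces to a short $S$-matrix computation forcing precisely the semion twist. I expect the subtlety to lie in verifying that the hypothesis on $\Rep(\mbbZ_2)$ genuinely implies M\"uger-centrality in the intended application, rather than in any calculational obstruction.
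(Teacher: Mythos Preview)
Your argument is essentially the paper's: both use balancing (with $\theta_a=1$) to obtain $\theta_b=\theta_c$ and then show that $\theta_b\in\{\pm 1\}$ would make every $S$-entry equal to $1$, contradicting non-symmetry. You have also put your finger on a genuine imprecision in the statement: deducing $\theta_b=\theta_c$ from balancing requires $S_{a,b}=1$, i.e.\ that $\langle a\rangle$ lie in the M\"uger center $\mcD'$, not merely in $\mcD$. The paper's proof uses this tacitly, and without it the proposition is false---the toric code $\mcZ(\Vec_{\mbbZ_2})$ is non-symmetric, contains a Tannakian $\langle e\rangle\cong\Rep(\mbbZ_2)$, yet all its twists lie in $\{\pm 1\}$. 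In the intended application (the proof of Corollary~\ref{Z_4}) one has $g^2\in\mcC_{\ad}=C_{\mcC}(\mcC_{\pt})$, so $g^2$ does centralize $\mcC_{\pt}$ and the needed hypothesis is available; your caveat is exactly the right one.
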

  \begin{proof}
    Denote the simple objects in $\mcD$ by $1,g,h,gh$ and $g$ generates the
    symmetric category $\Rep \(\mbbZ_2\)$. Then examining the balancing relations
    for $S_{h,h},S_{g,h},S_{gh,gh}$,
    and $S_{h,gh}$ we see that $\th_{h}=\th_{gh}$,
    $S_{h,h}=S_{gh,gh}=S_{h,gh}=\th_{h}^{2}$. Since $\mcD$ is not symmetric, but
    $\langle g \rangle$ is, we can conclude that $S_{h,h}\neq 1$. However,
    $\mcD$ is self-dual and hence $S_{h,h}$ is a real root of unity. In
    particular, $S_{h,h}=-1$ and $\th_{h}=\pm i$.  Consequently, $h$ generates a
    Semion subcategory.
  \end{proof}

  \begin{corollary}\label{Z_4}
    $\mcU\(\mcC\)\cong\mbbZ/4\mbbZ$.
  \end{corollary}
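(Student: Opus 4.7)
The plan is to prove $G(\mcC)\cong\mbbZ/4\mbbZ$, which yields $\mcU(\mcC)\cong\widehat{G(\mcC)}\cong\mbbZ/4\mbbZ$ by the modular identification recalled from \cite{GN2}. The argument splits into two phases: first I would show the Sylow $2$-subgroup of $G(\mcC)$ is cyclic of order $4$, then I would show $|G(\mcC)|$ has no odd prime divisor. Throughout, primality of $\mcC$ is the crucial input: no non-trivial pointed subcategory can be modular in its own right, since otherwise $\mcC$ would factor via Müger's theorem.

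The starting point is a dimension count. Since $\mcC$ is strictly weakly integral of dimension $8m$ with $m$ odd square-free, the GN-grading is by $\mbbZ/2\mbbZ$ with $\dim\mcC_{\intcat}=4m$, and every integral simple has dimension $1$ or $2$. Hence $4\mid|G(\mcC)|$, while $\mcC_{\pt}\subset\mcC_{\intcat}$ gives $|G(\mcC)|\mid 4m$. Writing $|G(\mcC)|=4\ell$, I obtain an odd divisor $\ell$ of $m$, and in particular $\dim\mcC_{\ad}=8m/|G(\mcC)|=2m/\ell\equiv 2\pmod{4}$.

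Let $\mcD_2\subset\mcC_{\pt}$ denote the pointed braided subcategory corresponding to the Sylow $2$-subgroup, so $|\mcD_2|=4$; I want to exclude $\mcD_2\cong(\mbbZ/2\mbbZ)^2$. I would work with the quadratic form $\theta$ on $\mcD_2$ and the associated bilinear form $b(g,h)=\theta_{gh}\theta_g^{-1}\theta_h^{-1}$. Primality forbids $\mcD_2$ from being modular, so $b|_{\mcD_2}$ is degenerate, and I would split into cases by the rank of its radical. If the radical is all of $\mcD_2$, then using $b(g,h)=1$ for $g,h$ of coprime orders, every element of $\mcD_2$ lies in the Müger center of $\mcC_{\pt}$, that is, in $\mcC_{\pt}\cap\mcC_{\ad}$; this forces $4=|\mcD_2|\mid\dim\mcC_{\ad}=2m/\ell$, impossible since $m/\ell$ is odd. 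If the radical has order $2$, then $b$ descends to a non-degenerate form on the quotient $\mcD_2/\mathrm{rad}\cong\mbbZ/2\mbbZ$, so $b(h,h)=-1$ for a lift $h$ of the non-trivial class; the identity $\theta_h^2 b(h,h)=1$ then makes $h$ a semion, generating a modular Semion subcategory which violates primality. Trivial radical would make $\mcD_2$ itself modular, a contradiction.

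For the odd part, suppose an odd prime $p$ divides $\ell$. The order-$p$ subgroup of $G(\mcC)$ generates a pointed braided subcategory $\mcP$ of dimension $p$, and the same primality dichotomy forces $b|_{\mcP}$ to be degenerate, hence identically trivial on cyclic $\mbbZ/p\mbbZ$. Since there are no self-dual invertibles of odd order other than $\mathbf{1}$, $\mcP\cong\Rep(\mbbZ/p\mbbZ)$ must be Tannakian. De-equivariantizing $\mcC$ by $\mcP$ produces a modular neutral component of dimension $8m/p^2$, which fails to be an integer since $p$ is odd, divides square-free $m$, and is coprime to $8$. This final contradiction forces $\ell=1$. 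The main obstacle is the rank-one degenerate case in the $2$-Sylow analysis, where one must translate a non-degenerate bilinear form on a $\mbbZ/2\mbbZ$ quotient into the existence of an honest semion inside $\mcC$ before invoking primality; every other case reduces either to the parity comparison $\dim\mcC_{\ad}\equiv 2\pmod{4}$ against $|\mcD_2|=4$, or to an immediate Deligne factorization.
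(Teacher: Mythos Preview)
Your argument is correct and reaches the same two conclusions as the paper: odd primes in $|G(\mcC)|$ are eliminated by de-equivariantizing a Tannakian $\Rep(\mbbZ_p)$ and observing that $8m/p^{2}$ is not an integer, and the Klein four-group is eliminated because it would force a modular Semion subcategory, contradicting primality.  The logical content is the same, but the packaging differs in two respects worth noting.

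First, the paper reverses your order: it kills the odd part first to obtain $|G(\mcC)|=4$, and only then distinguishes $\mbbZ/4\mbbZ$ from $(\mbbZ/2\mbbZ)^2$.  Second, and more substantively, the paper handles the Klein case by way of a separate Proposition~\ref{semion}: it first shows that the unique nontrivial invertible in $\mcC_{\ad}$ is a boson (since it fixes a $2$-dimensional simple, invoking M\"uger's lemma that a fermion cannot fix any simple), then feeds this $\Rep(\mbbZ_2)$ into the proposition to extract a semion.  Your radical analysis of the bilinear form $b$ on the $2$-Sylow avoids that detour entirely: the ``full radical'' case is exactly the paper's observation that a symmetric $\mcC_{\pt}$ would embed in $\mcC_{\ad}$ (impossible since $\dim\mcC_{\ad}\equiv 2\pmod 4$), and the ``rank-one radical'' case recovers the semion directly from $\theta_h^{2}=b(h,h)^{-1}=-1$ without ever needing to identify a boson.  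Your route is thus slightly more self-contained.  One small point: your assertion that the GN-grading is $\mbbZ/2\mbbZ$ with $\dim\mcC_{\intcat}=4m$ is stated rather than argued; it is true, but the quickest justification is the paper's own observation (Lemma~\ref{Lemma: pkm}) that $8\nmid|G(\mcC)|$, which together with your $4\mid|G(\mcC)|$ gives the Sylow $2$-subgroup order $4$ directly.
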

  \begin{proof}
    By \lemmaref{Lemma: pkm} we know that $8$ does not divide $\dim\mcC_{\pt}$.
    Moreover, by invoking the universal grading and examining the dimension
    equation for $\mcC_{\ad}$ modulo 4 we see that there exists an odd integer
    $k$ such that $\dim\mcC_{\pt}=4k$. If $k>1$, there exists an
    odd prime $p$ dividing $k$ and a $p$-dimensional category
    $\mcD\subset\mcC_{\pt}$.

    Since $\mcC$ is prime we know that $\mcD$ is symmetric and Tannakian.
    De-equivariantizing $\mcC$ by $\mbbZ_{p}$, the trivial component of the
    resulting $\mbbZ_{p}$-graded category has dimension $8m/p^{2}$
    \cite[Proposition 4.56(i)]{DGNO1}. This is not possible as $p$ is odd and $m$
    is square-free. Thus $\dim\mcC_{\pt}=4$.

    By dimension count we can deduce that $\dim\(\mcC_{\ad}\)_{\pt}=2$. Since
    $C_{\mcC}\mcC_{\pt}=\mcC_{\ad}$ we can deduce that $\mcC_{\pt}$ is not symmetric.
    However, $\mcC$ is prime and so $\Vec\neq
    C_{\mcC_{\ad}}\(\mcC_{\ad}\)\subset\(\mcC_{\ad}\)_{\pt}$ and so
    $C_{\mcC_{\ad}}\(\mcC_{\ad}\)=\(\mcC_{\ad}\)_{\pt}$. Letting $g$ be a
    generator of this M\"{u}ger center and $X$ a 2-dimensional object in
    $\mcC_{\ad}$, then by dimension count $g$ is a subobject of $X\otimes
    X^{*}$. In particular, $g$ fixes $X$ and thus $\th_{g}=1$ by \cite[Lemma
    5.4]{M5}.

    By applying Proposition \ref{semion} to $\mcD = \mcC_{\pt}$ and invoking the
    primality of $\mcC$ we have that $\mcC_{\pt}\ncong \Rep\(\mathbb
    Z_2\times\mathbb Z_2\)$.
  \end{proof}

  Henceforth, let $g$ be a generator of $\mcC_{\pt}$, and $\mcC_{g^{k}}$ the
  component of the universal grading of $\mcC$ corresponding to the simple
  $g^{k}\in\mcC_{\pt}$. Furthermore note that, $\mcC_{\1}$ and
  $\mcC_{g^{2}}$ each contain exactly two invertible objects. Finally, we will
  denote by $X_{1},X_{2},\ldots, X_{n}$ the simple 2-dimensional objects in
  $\mcC_{\ad}$ and $Y_{1}=g\otimes X_{i}$ the simple 2-dimensional objects
  in $\mcC_{g^{2}}$.
  \begin{rmk}
    \label{rmk: Tannakian}
    Under this notation $\(\mcC_{\ad}\)_{\pt}=\langle g^{2}\rangle$ is a
    2-dimensional Tannakian category.
  \end{rmk}

  \begin{lemma}
    \label{Lemma: self-dual}
    All 2-dimensional simple objects in $\mcC$ are self-dual.
  \end{lemma}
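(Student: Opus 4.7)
The plan is to pass to the modularization of $\mcC_{\ad}$ and exploit that $m$ is odd and square-free. By the proof of \corref{Z_4}, $(\mcC_{\ad})_{\pt} = \langle g^{2}\rangle = C_{\mcC_{\ad}}(\mcC_{\ad})$ is Tannakian, equivalent to $\Rep(\mbbZ_{2})$, since its generator has twist $1$. Let $\mcE := (\mcC_{\ad})_{\mbbZ_{2}}$ denote the corresponding de-equivariantization; this is the modularization of $\mcC_{\ad}$, hence a modular category of dimension $m$.

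The crux of the proof is to show that $\mcE$ is pointed. Since $\dim \mcE = m$ is odd, the GN-grading group of $\mcE$---an elementary abelian $2$-group whose order divides $m$---must be trivial, so $\mcE$ is integral. For any simple $Y \in \mcE$, the standard divisibility for modular categories then yields $d_{Y}^{2} \mid m$, and squarefreeness of $m$ forces $d_{Y} = 1$. Thus $\mcE$ is pointed, and $G(\mcE)$ is cyclic of order $m$ (again because $m$ is square-free). This is the main obstacle; the remaining steps are formal.

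Recovering $\mcC_{\ad} \simeq \mcE^{\mbbZ_{2}}$, the invertibles of $\mcC_{\ad}$ are in bijection with pairs $(x,\rho)$, where $x \in G(\mcE) \cong \mbbZ_{m}$ is fixed by the induced $\mbbZ_{2}$-action and $\rho$ is a character of $\mbbZ_{2}$. Since $\abs{G(\mcC_{\ad})} = 2$, only $x = \1$ is fixed; this forces (for $m > 1$; the lemma is vacuous when $m = 1$) the $\mbbZ_{2}$-action on $\mbbZ_{m}$ to be the inversion automorphism $x \mapsto x^{-1}$, the unique fixed-point-free involution of $\mbbZ_{m}$. Each $X_{i}$ then corresponds to a free orbit $\{x, x^{-1}\}$, and since duality in $\mcC_{\ad}$ corresponds to inversion in $\mcE$, $X_{i}^{*}$ corresponds to the same orbit, giving $X_{i}^{*} \cong X_{i}$. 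For the remaining $2$-dimensional simples $Y_{i} = g \otimes X_{i} \in \mcC_{g^{2}}$, one computes $Y_{i}^{*} = X_{i}^{*}\otimes g^{-1} = X_{i}\otimes g^{3} = g^{2}\otimes Y_{i} = Y_{i}$, using $g^{2}\otimes X_{i} \cong X_{i}$.
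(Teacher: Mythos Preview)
Your proof is correct and takes a genuinely different route from the paper's.

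The paper argues internally to $\mcC_{\ad}$: from $X_i\otimes X_i^*=\1\oplus g^2\oplus\tilde{X_i}$ it observes that both invertibles lie in $(\mcC_{\ad})_{\ad}$, and then a parity count on the universal grading of $\mcC_{\ad}$ (the trivial component has dimension $2+4k$, while any non-trivial component would consist solely of $2$-dimensional objects and hence have dimension divisible by $4$) forces $(\mcC_{\ad})_{\ad}=\mcC_{\ad}$, from which self-duality of the $X_i$ is read off.  Your argument instead passes to the modularization $\mcE=(\mcC_{\ad})_{\mbbZ_2}$, uses that $\dim\mcE=m$ is odd and square-free to conclude $\mcE$ is pointed cyclic, identifies the $\mbbZ_2$-action on $\mbbZ_m$ as inversion (the unique fixed-point-free involutive automorphism, since $m$ is odd square-free), and then reads off self-duality of the $X_i$ from the fact that each free orbit $\{x,x^{-1}\}$ is stable under inversion.

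What each buys: the paper's argument is more elementary, needing only grading and dimension counts and avoiding the equivariantization dictionary for simples.  Your approach is more structural and in fact anticipates the proof of \lemmaref{Lemma: Cyclically Generated}, where the same de-equivariantization of $\mcC_{\intcat}$ is carried out; one could merge the two and obtain self-duality as a byproduct of identifying $(\mcC_{\intcat})_{\mbbZ_2}$ as a $\mbbZ_{2m}$-cyclic modular category.  A minor remark: you invoke $g^2\otimes X_i\cong X_i$ at the end, which is indeed established in the proof of \corref{Z_4}; it is also an immediate consequence of your own argument, since if $g^2$ moved some $X_i$ the image in $\mcE$ would be a simple of dimension $2$, contradicting pointedness.
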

  \begin{proof}
    Recall that under our notation, $X_{i}$ are the simple 2-dimensional objects
    in $\mcC_{\ad}$, and $Y_{i}=g X_{i}$ are the 2-dimensional simple
    objects in $\mcC_{g^{2}}$. Then
    $Y_{i}^{*}=g^{3}X_{i}^{*}=g\(g^{2}X_{i}^{*}\)=gX_{i}^{*}$. So it suffices to
    show that $X_{i}$ are self-dual. To this end, observe that $X_{i}\otimes
    X_{i}^{*}=\1\oplus g^{2}\oplus \tilde{X_{i}}$ for some 2-dimensional
    simple $\tilde{X_{i}}$. In particular, $\1$ and $g^{2}$ are simples in
    $\(\mcC_{\ad}\)_{\ad}$ and all 2-dimensionals in $\(\mcC_{\ad}\)_{\ad}$ are
    self-dual. Now suppose $\(\mcC_{\ad}\)_{\ad}\neq\mcC_{\ad}$. Then
    $\mcC_{\ad}$ has a nontrivial universal grading, the trivial component is
    $\(\mcC_{\ad}\)_{\ad}$ and has dimension $2+4k$ for some integer $k$. The
    remaining components must also have this dimension, but consist entirely of
    2-dimensional objects. This is not possible and so
    $\(\mcC_{\ad}\)_{\ad}=\mcC_{\ad}$.
  \end{proof}
%
%

  \begin{lemma}
    \label{Lemma: Cyclically Generated}
    $\mcC_{\intcat}$ is Grothendieck equivalent to
    $\Rep\(\mbbZ_{m}\rtimes\mbbZ_{4}\)$ and $\mcC_{\ad}$ is Grothendieck
    equivalent to $\Rep\(D_{2m}\)$.
  \end{lemma}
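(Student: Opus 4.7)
The plan is to identify both Grothendieck rings simultaneously via de-equivariantization by $\langle g^2\rangle$, followed by reconstruction through $\mbbZ_2$-equivariantization. Throughout, I will exploit the fact that $(\mcC_{\ad})_{\pt} = \langle g^2\rangle$ is Tannakian (\rmkref{rmk: Tannakian}).

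First I would verify that $\langle g^2\rangle$ is precisely the M\"uger center of both $\mcC_{\ad}$ and $\mcC_{\intcat}$. For $\mcC_{\ad}$ this is implicit in the proof of \corref{Z_4}. For $\mcC_{\intcat}$ the inclusion $C_{\mcC_{\intcat}}(\mcC_{\intcat}) \subseteq C_{\mcC}(\mcC_{\ad}) = \mcC_{\pt}$ reduces the question to which invertibles are central. Since $g^2 \in \mcC_{\ad}$ centralizes $g$, we have $S_{g,g}/d_g^2 = \pm 1$; if this ratio were $+1$ then $\mcC_{\pt}$ would be Tannakian, allowing a $\mbbZ_4$-de-equivariantization of $\mcC$ to dimension $m/2$, which is impossible for $m$ odd. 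So $g,g^3 \notin \mcC_{\intcat}'$. Meanwhile, $g^2$ fixes each $X_i$ by the proof of \corref{Z_4}, hence also fixes each $Y_i = g\otimes X_i$, and the balancing equation together with $\theta_{g^2}=1$ forces $S_{g^2,Y_i} = d_{Y_i}$, so $g^2 \in \mcC_{\intcat}'$ as claimed.

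Next I would de-equivariantize both categories by $\langle g^2\rangle$, obtaining modular categories of dimensions $m$ and $2m$. A direct orbit count shows both are pointed: in $(\mcC_{\ad})_{\mbbZ_2}$ the free orbit $\{\1,g^2\}$ contributes one invertible and each of the $(m-1)/2$ fixed objects $X_i$ splits into two invertibles, for $m$ invertibles total, while $(\mcC_{\intcat})_{\mbbZ_2}$ additionally acquires one invertible from the orbit $\{g,g^3\}$ and two from each split $Y_i$, yielding $2m$ invertibles total. Since $m$ and $2m$ are square-free, these pointed modular categories are $\mbbZ_m$-cyclic and $\mbbZ_{2m}$-cyclic, respectively.

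To conclude, I would reverse the de-equivariantization. The $\mbbZ_2$-action on the $\mbbZ_m$-cyclic modular category must be non-trivial since $\mcC_{\ad}$ is not pointed for $m>1$, and the only non-trivial braided tensor autoequivalence of such a category for $m$ odd is particle-hole inversion (cf.\ the discussion preceding \thmref{Theorem: Metaplectic main result}); its equivariantization has Grothendieck ring $\Rep(\mbbZ_m\rtimes\mbbZ_2) = \Rep(D_{2m})$, establishing the first claim. For $\mcC_{\intcat}$, the induced $\mbbZ_2$-action on the $\mbbZ_{2m}$-cyclic category must restrict to the action already identified on its $\mbbZ_m$-subcategory and preserve the quadratic form, forcing it to be inversion on all of $\mbbZ_{2m}$. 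Such equivariantizations correspond to extensions of $\mbbZ_2$ by $\mbbZ_{2m}$ with inversion action, classified by $H^2(\mbbZ_2,\mbbZ_{2m})=\mbbZ_2$ with options $G=D_{4m}$ (abelianization $\mbbZ_2\times\mbbZ_2$) or $G=\mbbZ_m\rtimes\mbbZ_4$ (abelianization $\mbbZ_4$). The main obstacle is selecting between these two; the invertible objects of $\Rep(G)$ recover $G^{ab}$, and because \corref{Z_4} pins down $\mcC_{\pt}\cong\mbbZ_4$ inside $\mcC_{\intcat}$, the extension must be non-split, so $\mcC_{\intcat}$ has the Grothendieck ring of $\Rep(\mbbZ_m\rtimes\mbbZ_4)$.
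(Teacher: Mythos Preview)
Your route differs from the paper's. The paper argues that $(\mcC_{\intcat})_{\mbbZ_2}$ is pointed cyclic, hence cyclically generated, lifts this to $\mcC_{\intcat}$ via \cite[Proposition~4.30(i)]{DGNO1}, and then invokes the classification \cite[Theorem~4.2 and Remark~4.4]{NR1} together with \lemmaref{Lemma: self-dual} to read off both Grothendieck rings at once. You instead reconstruct the fusion rules directly by identifying the $\mbbZ_2$-action on the cyclic de-equivariantization and realizing the equivariantization as $\Rep(G)$ for an extension $G$ singled out by the known isomorphism type of $G(\mcC)$. Your approach is more self-contained and makes the role of the $\mbbZ_4$ explicit; the paper's is shorter because it outsources the fusion-rule identification to \cite{NR1}.

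Two steps in your argument need repair. First, from $S_{g,g}=1$ you only obtain that $\mcC_{\pt}$ is \emph{symmetric}, not Tannakian: the possibility $\theta_g=-1$ is compatible with $S_{g,g}=\theta_g^{-2}=1$, and in that super-Tannakian case no $\mbbZ_4$-de-equivariantization is available, so your ``dimension $m/2$'' contradiction does not fire. A clean replacement: if $\mcC_{\pt}$ were symmetric then $\mcC_{\pt}\subset C_{\mcC}(\mcC_{\pt})=\mcC_{\ad}$, contradicting $\lvert(\mcC_{\ad})_{\pt}\rvert=2$ from the proof of \corref{Z_4}. Second, for composite square-free $m$ it is \emph{false} that inversion is the only nontrivial $\mbbZ_2$-action by braided tensor autoequivalences on a $\mbbZ_m$-cyclic modular category---one may invert on some prime factors and act trivially on others, which is precisely what the passage you cite says. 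You must exclude these mixed actions. The fix is immediate: any $\mbbZ_p$-factor on which the action is trivial contributes $p$ fixed points and hence at least $2p>2$ invertibles to the equivariantization, again contradicting $\lvert(\mcC_{\ad})_{\pt}\rvert=2$. With these two corrections your argument goes through.
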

  \begin{proof}
    Since $\mcC_{ad}$ is a subcategory of $\mcC_{\intcat}$, then
    $C_{\mcC_{\intcat}}\paren{\mcC_{\intcat}}\subset
    C_{\mcC_{\intcat}}\paren{\mcC_{ad}}=\Rep\mbbZ_{2}$. Given that $\mcD$ is prime, then $\mcC_{\intcat}$ is not modular and so its M\"{u}ger center is $\mcC_{\intcat}\cong\Rep\mbbZ_{2}$ Tannakian. Thus
    $\paren{\mcC_{\intcat}}_{\mbbZ_{2}}$ is pointed, modular, and of dimension
    $2m$. In particular, $\paren{\mcC_{\intcat}}_{\mbbZ_{2}}$ is cycically
    generated, \textit{i.e}, the category is tensor generated by a single
    object. So by \cite[Proposition 4.30(i)]{DGNO1} $\mcC_{\intcat}$ must be
    cyclically generated.
    The remainder of the statement follows immediately from
    \cite[Theorem 4.2 and Remark 4.4]{NR1} and \lemmaref{Lemma: self-dual}.
  \end{proof}
%

  To completely determine the Grothendieck class of $\mcC$ we must determine
  $X_{i}\otimes V_{k}$ and $g\otimes V_{k}$ where $V_{k}$ are the four
  non-integral objects.
  \begin{cor}
    \label{Cor: non-integral objects}
    $\mcC$ has four non-integral objects: $V$, $gV$, $g^{2}V$, $g^{3}V$.
    Moreover, $V^{*}=g^{3}V$. In particular, the GN-grading of $\mcC$ is
    $\mbbZ_{2}$ and all non-integral objects have dimension $\sqrt{m}$.
  \end{cor}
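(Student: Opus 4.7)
The plan is to describe the non-integral simples of $\mcC$ using the orbit structure of $\mcC_{\pt}=\langle g\rangle\cong\mbbZ_4$ together with a de-equivariantization by $\langle g^2\rangle$.

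First, I would combine $\dim\mcC_{\intcat}=4m$ from \lemmaref{Lemma: Cyclically Generated} with $\dim\mcC=8m$ to conclude that the GN-grading of $\mcC$ is $\mbbZ_2$ with non-trivial component of dimension $4m$, and let $n>1$ be the square-free integer such that every non-integral simple has dimension in $\mbbZ\sqrt{n}$. Since $g\notin\mcC_{\ad}$, no non-integral $V$ satisfies $gV\cong V$ (else $g$ would be a summand of $V\otimes V^*\in\mcC_{\ad}$), so each $\langle g\rangle$-orbit has size $2$ or $4$. In size $4$ the stabilizer is trivial and $V\otimes V^*=\1\oplus\sum a_iX_i$, so $d_V^2$ is odd; in size $2$ the element $g^2$ appears in $V\otimes V^*$, so $d_V^2$ is even.

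I would then de-equivariantize by the Tannakian boson $\langle g^2\rangle$ (\rmkref{rmk: Tannakian}) to obtain $\widetilde{\mcC}:=\mcC_{\mbbZ_2}$, a $\mbbZ_2$-crossed braided fusion category of dimension $4m$, whose trivial component $\widetilde{\mcC}_0$ is the $\mbbZ_{2m}$-cyclic pointed modular category. An orbit of size $4$ in $\mcC$ descends to two simples of dimension $d_V=k\sqrt{n}$ in the non-trivial component $\widetilde{\mcC}_1$, while each simple in an orbit of size $2$ splits into two simples of dimension $d_V/2$ (the algebraic-integer condition forcing $k$ even in that case). Since products of non-integrals in $\widetilde{\mcC}$ land in the pointed $\widetilde{\mcC}_0$, $\widetilde{\mcC}$ is a generalized Tambara--Yamagami category over $\mbbZ_{2m}$; each non-integral $\tilde V\in\widetilde{\mcC}_1$ satisfies $\tilde V\otimes\tilde V^*=\sum_{h\in H}h$ with $|H|=d_{\tilde V}^2$ dividing the square-free integer $2m$. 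This forces $k=1$ in the size-$4$ case and $k=2$ in the size-$2$ case, so all simples of $\widetilde{\mcC}_1$ have dimension $\sqrt{n}$. A pure size-$2$ scenario would then give $2sn=m$ for $s$ orbits, contradicting $m$ odd, so at least one orbit has size $4$ and $n$ is odd.

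The main remaining obstacle is to show that $\widetilde{\mcC}_1$ contains exactly two simples (hence a unique size-$4$ $\mcC$-orbit, no size-$2$ orbits, and $n=m$). For this I would invoke the classification of $\mbbZ_2$-crossed braided extensions of a $\mbbZ_{2m}$-cyclic pointed modular category by the particle-hole (inversion) action (as in the arguments leading to \thmref{Theorem: Metaplectic main result}, and in analogy with modular generalized Tambara--Yamagami categories \cite{Nat}): the non-trivial crossed component necessarily contains exactly two simples, each of dimension $\sqrt{m}$. Consequently the non-integral part of $\mcC$ consists of a single orbit $\{V,gV,g^2V,g^3V\}$ of dimension $\sqrt{m}$, confirming the GN-grading claim. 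Finally, duality reverses the universal grading, so $V^*$ lies in the same component as $g^3V$; that component contains only $gV$ and $g^3V$, so after possibly relabeling the representative $V$ we obtain $V^*=g^3V$.
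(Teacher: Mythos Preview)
Your approach via de-equivariantization and the generalized Tambara--Yamagami structure of $\widetilde{\mcC}=\mcC_{\mbbZ_2}$ is genuinely different from the paper's, but it has a real gap at the decisive step. You correctly reduce to showing that $\widetilde{\mcC}_1$ has exactly two simples, but then you ``invoke the classification of $\mbbZ_2$-crossed braided extensions of a $\mbbZ_{2m}$-cyclic pointed modular category by the particle-hole (inversion) action.'' Two things are missing. First, you have not established that the $\mbbZ_2$-action on $\widetilde{\mcC}_0$ \emph{is} particle-hole; this would require using primality of $\mcC$ together with the argument behind \rmkref{nontriv} (a trivial action on any prime-power cyclic factor would force a Deligne factor of $\mcC$). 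Second, even granting the particle-hole action, you still need the structural fact that the twisted sector of such a crossed extension has exactly $\lvert\{h\in\mbbZ_{2m}:2h=0\}\rvert=2$ simples, which you do not supply. Both gaps can be filled, but as written the argument is incomplete and leans on machinery heavier than what the paper develops.

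By contrast, the paper's proof is elementary and self-contained. It uses the balancing relation and orthogonality of the $\1$- and $g^2$-columns of the $S$-matrix to show directly that there are no simples of dimension $2\sqrt{x}$ and that $g^2$ moves every non-integral simple (so your ``size-$2$ orbits'' are excluded at once, without any de-equivariantization). Then, for any two non-integral simples $V,W$ in the same universal-grading component $\mcC_g$, the oddness of $d_Vd_W=x$ forces exactly one of $g,g^3$ to appear in $V\otimes W$; this yields $W\in\{V,g^2V\}$, hence a single $\langle g\rangle$-orbit and exactly four non-integral simples. This parity trick replaces your appeal to crossed-extension classification with a two-line fusion argument.
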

  \begin{proof}
    We know that the GN-grading is given by a nontrivial abelian 2-group and
    hence is $\mbbZ/2\mbbZ$, by \lemmaref{Lemma: pkm} and \corref{Z_4}. In
    particular, since $m$ is square-free, there is a square-free integer $x$
    such that the non-integral objects have dimension $\sqrt{x}$ and
    $2\sqrt{x}$.

    A straightforward application of the fusion symmetries and a dimension
    calculation reveals that $g^{2}$ fixes all simples of dimension $2\sqrt{x}$.
    Using this fact, we can appeal to the balancing equation
    and find that $S_{g^{2},g^{2}X} = 2\sqrt{x}$ and
    $S_{g^{2},g^{2}Y}=\frac{\th_{Y}}{\th_{g^{2}Y}}\sqrt{x}$ for any simples $X$
    and $Y$ of dimension $2\sqrt{x}$ and $\sqrt{x}$ respectively. However,
    $g^{2}$ is self-dual and so $\th_{Y}/\th_{g^{2}Y}=\pm1$.  Now observe that
    the orthogonality of the $g^{2}$ and $\1$ columns of the $S$-matrix can
    only be satisfied if $\th_{Y}=-\th_{g^{2}Y}$ and there are no objects of
    dimension $2\sqrt{x}$. In particular, $g^2$ moves all simples of dimension
    $\sqrt{x}$.

    Next, let $V,W\in\mcC_{g}$ be simples of dimension $\sqrt{x}$. Moreover,
    without loss of generality we may assume that $g$ is a subobject of $V\otimes
    V$. Then $\1$ is a subobject of $\(g^{3}V\)\otimes V$ and hence $V^{*} =
    g^{3}V$.  Next note that by the universal grading and the parity of $x$ we
    can deduce that either $g$ or $g^{3}$ is a subobject of $V\otimes W$ (but
    not both). In the former case $\1$ is a subobject of $V\otimes g^{3}W$
    and in the later case $\1$ is a subobject of $V\otimes gW$. Thus
    $g^{3}W=V^{*}=g^{3}V$ or $gW=V^{*}=g^{3}V$, according to the invertible
    subobject appearing in $V\otimes W$. Since $W$ was arbitrary we can conclude
    that there are exactly four non-integral objects: $V,gV,g^{2}V$, and
    $g^{3}V$. Moreover, each of these objects has dimension $\sqrt{m}$.
  \end{proof}
  \begin{lemma}
    \label{Lemma: 2 with sqrtm}
    $X_{i}\otimes V\cong V\oplus g^{2}V$
  \end{lemma}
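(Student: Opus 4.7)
The plan is to combine the universal $\mbbZ/4\mbbZ$-grading with the fact that $g^{2}$ fixes each $X_{i}$, in order to pin down both which non-integral simples can appear in $X_{i}\otimes V$ and with what multiplicity.

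First, since $X_{i}\in\mcC_{\ad}$ lies in the trivial universal grading component, $X_{i}\otimes V$ lies in the same universal grading component as $V$. Among the four non-integral simples $V, gV, g^{2}V, g^{3}V$ from \corref{Cor: non-integral objects}, tensoring $V$ by an invertible $g^{k}$ preserves the universal grading component precisely when $g^{k}\in\mcC_{\ad}$; since $\paren{\mcC_{\ad}}_{\pt}=\langle g^{2}\rangle$ and $g\notin\mcC_{\ad}$, only $V$ itself and $g^{2}V$ share its component, and by the corollary these are non-isomorphic. Thus
\[
  X_{i}\otimes V \cong aV \oplus b\,g^{2}V
\]
for some non-negative integers $a, b$, and the dimension equation $(a+b)\sqrt{m}=2\sqrt{m}$ gives $a+b=2$.

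Next, I would invoke $g^{2}\otimes X_{i}\cong X_{i}$, which follows from the proof of \corref{Z_4}: there $g^{2}$ is shown to be a summand of $X_{i}\otimes X_{i}^{*}$, and by a dimension count (both sides are $2$-dimensional simples) the object $g^{2}\otimes X_{i}$ must equal $X_{i}$. Tensoring the decomposition above by $g^{2}$ and using $g^{4}=\1$ yields
\[
  X_{i}\otimes V \cong \paren{g^{2}\otimes X_{i}}\otimes V \cong g^{2}\otimes\paren{aV \oplus b\,g^{2}V} \cong a\,g^{2}V\oplus bV.
\]
Comparing with $aV\oplus b\,g^{2}V$ forces $a=b$, and together with $a+b=2$ this gives $a=b=1$, as desired.

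There is no serious obstacle to this plan; the only subtlety is keeping straight the two distinct roles played by the invertibles, namely the universal grading (which restricts the possible summands to $V$ and $g^{2}V$) and the stabilizer relation $g^{2}\otimes X_{i}\cong X_{i}$ (which forces their multiplicities to match).
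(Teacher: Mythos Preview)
Your proof is correct and follows essentially the same approach as the paper's: both use the grading and dimension count to reduce to $X_i\otimes V\cong aV\oplus b\,g^2V$ with $a+b=2$, and then use that $g^2$ fixes $X_i$ (hence fixes $X_i\otimes V$) to rule out $a\neq b$. The paper phrases the second step as eliminating the cases $2V$ and $2g^2V$, while you phrase it as comparing coefficients after tensoring by $g^2$, but these are the same argument.
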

  \begin{proof}
    By dimension count and the grading we have $X_{i}\otimes V=V\oplus g^{2}V$,
    $2V$, or $2g^{2}V$. The later two are not possible as $g^{2}$ fixes $X_{i}$
    and hence must fix $X_{i}\otimes V$.
  \end{proof}

%
  \begin{theorem}
    \label{Theorem: p3m main result}
    If $\mcC$ is a non-pointed modular category of dimension $p^{3}m$ where $p$
    is a prime and $m$ is a square-free integer that is coprime to $p$,
    then, $p=2$ and one of the following is
    true:
    \begin{itemize}
      \item[(i)] $\mcC$ is a Deligne product of an even metaplectic modular
      category of dimension $8\ell$ and a pointed $\mbbZ_k$-cyclic modular
      category with $k$ odd, or
      \item[(ii)] $\mcC$ is the Deligne product of a Semion modular category with
      a modular category of dimension $4m$ (see \cite{BGNPRW1}).
    \end{itemize}
  \end{theorem}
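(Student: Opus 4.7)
The plan is to reduce to the case that $\mcC$ is prime, strictly weakly integral, and of dimension $8m$, and then to identify its Grothendieck ring with that of $SO(2m)_2$ using all the structural results already established in this section.

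For the reduction, since $\mcC$ is non-pointed, \lemmaref{Lemma: integral p3m} forces $\mcC$ to be non-integral, hence strictly weakly integral. \lemmaref{Lemma: 4 divides} then gives $4 \mid p^3 m$, and because $\gcd(m,p)=1$ with $m$ square-free, this forces $p=2$, so $\dim\mcC = 8m$. \propref{prop: prime or classified} now either places $\mcC$ directly in case (ii) of the theorem, or writes $\mcC = \mcC_1 \boxtimes \mcC_2$ with $\mcC_1$ a prime strictly weakly integral modular category of dimension $8\ell$ (with $\ell \mid m$) and $\mcC_2$ a pointed $\mbbZ_k$-cyclic modular category with $k = m/\ell$ odd. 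In the second case it suffices to show $\mcC_1$ is even metaplectic, so I may as well assume $\mcC$ itself is prime of dimension $8m$.

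Under this assumption, the structural results of this section furnish an essentially complete description of $\mcC$: \corref{Z_4} gives $G(\mcC)\cong\mbbZ/4\mbbZ = \langle g\rangle$; \lemmaref{Lemma: self-dual} shows every $2$-dimensional simple is self-dual; \lemmaref{Lemma: Cyclically Generated} identifies the Grothendieck rings of $\mcC_{\intcat}$ and $\mcC_{\ad}$ with those of $\Rep(\mbbZ_m\rtimes\mbbZ_4)$ and $\Rep(D_{2m})$; \corref{Cor: non-integral objects} produces four non-integral simples $V, gV, g^2V, g^3V$ of dimension $\sqrt{m}$ with $V^* = g^3 V$ and $g^2\otimes V \ne V$; and \lemmaref{Lemma: 2 with sqrtm} gives $X_i\otimes V\cong V\oplus g^2V$. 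The only undetermined fusion coefficients are those in $V\otimes V^*$ and $V\otimes V$. Frobenius reciprocity applied to \lemmaref{Lemma: 2 with sqrtm} produces $N_{V,V^*}^{X_i} = N_{X_i,V}^V = 1$, while $g^2\otimes V\ne V$ forces $N_{V,V^*}^{g^2} = 0$; dimension counting then pins down $V\otimes V^* = \1 \oplus \bigoplus_i X_i$. A symmetric computation, using $Y_i = g\otimes X_j$ to deduce $Y_i\otimes V = gV\oplus g^3V$, yields $N_{V,V}^{Y_i} = 1$ and $N_{V,V}^g = 1$, and a final dimension count gives $V\otimes V = g \oplus \bigoplus_i Y_i$. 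These are exactly the fusion rules of $SO(2m)_2$ from Section \ref{Section: Metaplectic Classification}, so $\mcC$ is even metaplectic by definition.

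The step I expect to be the main obstacle is the Frobenius-reciprocity bookkeeping for $V\otimes V$ and $V\otimes V^*$: one must carefully use the $\mbbZ/4$-grading, the action of $g$ and $g^2$, and the identification $V^* = g^3V$ to rule out repeated or spurious summands and to verify that no invertible other than $g$ appears in $V\otimes V$. Once the Grothendieck ring has been matched with that of $SO(2m)_2$, recombining with the cyclic factor $\mcC_2$ from the reduction yields case (i) of the theorem (with $k=1$ when $\mcC$ itself was prime), completing the proof.
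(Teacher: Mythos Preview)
Your proof is correct and follows essentially the same route as the paper's: reduce via \lemmaref{Lemma: integral p3m} and \lemmaref{Lemma: 4 divides} to $p=2$, invoke \propref{prop: prime or classified} to reduce to the prime case, and then assemble the structural lemmas of this section to identify the fusion ring with that of $SO(2m)_2$. You actually supply more detail than the paper does at the final step, carrying out the Frobenius-reciprocity bookkeeping for $V\otimes V$ and $V\otimes V^*$ explicitly, whereas the paper simply cites \lemmaref{Lemma: 2 with sqrtm}, \lemmaref{Lemma: Cyclically Generated}, and \corref{Cor: non-integral objects} and asserts the conclusion.

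The one small omission is the degenerate case $\ell=1$. The proofs of the structural results you invoke (in particular \corref{Z_4}) tacitly use the existence of a $2$-dimensional simple in $\mcC_{\ad}$, which fails when $\dim\mcC=8$. The paper covers this by also citing \corref{Cor: Dimension 8 SWI Fusion Contains Ising}, which shows that any strictly weakly integral premodular category of dimension $8$ factors as $\mcI\boxtimes\mcD$ and hence is never prime as a modular category; so the prime factor in case~(i) automatically has $\ell>1$. Adding a single sentence to this effect closes the gap.
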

  \begin{proof}
    By Lemmas \ref{Lemma: 4 divides} and \ref{Lemma: integral p3m}, and
    \propref{prop: prime or classified}, \corref{Cor: Dimension 8 SWI Fusion
    Contains Ising} it suffices to consider $p=2$, $m$ odd and square-free, and
    $\mcC$ prime. In this case we may apply Lemmas \ref{Lemma: 2 with sqrtm},
    \ref{Lemma: Cyclically Generated}, and \corref{Cor: non-integral
    objects} to conclude that $\mcC$ is metaplectic.
  \end{proof}
%

  \printbibliography

\end{document}